\newtheorem{theorem}{\bf Theorem}
\newtheorem{corollary}[theorem]{\bf Corollary}
\newtheorem{example}[theorem]{\bf Example}
\newtheorem{lemma}[theorem]{\bf Lemma}
\theoremstyle{definition}
\newtheorem{definition}[theorem]{\bf Definition}
\newtheorem{remark}[theorem]{\bf Remark}
\newcommand \N{{\mathbb N}}
\newcommand \Z{{\mathbb Z}}
\definecolor{red}{rgb}{1.00,0.00,0.00}
\newcommand \0{{\mathbf{0}}}
\def \a{{\mathbf{a}}}
\def \b{{\mathbf{b}}}
\def \c{{\mathbf{c}}}
 \def \r{{\mathbf{r}}}
\def\sa{\langle {\bf a} \rangle}
\def\sb{\langle {\bf b} \rangle}
\def\sc{\langle {\bf c}\rangle}
\def\adj{\operatorname{Adj}}
\begin{document}
\title{Principal Matrices of Numerical Semigroups}
\author{Papri Dey and Hema Srinivasan }
\address{Department of Mathematics, University of Missouri, Columbia, MO 65211}
\maketitle
\begin{abstract} 
Principal matrices of a numerical semigroup of embedding dimension $n$ are special types of $n\times n$ matrices over the integers of rank $\le n-1$.  We show that such matrices as well as  all  pseudo principal matrices of size $n$  must have rank $\ge {\frac {n}{2}}$. We give structure theorems for pseudo principal matrices for which at least one $n-1\times n-1$ principal minor vanishes and thereby characterize the semigroups in embedding dimensions 4 and 5 in terms of their principal matrices.  We also prove a sufficient condition for an $n\times n$ pseudo principal matrix of rank $n-1$ to be principal.  
\end{abstract}

\section{Introduction}

Consider a sequence $  \a  =\{a_{1},\dots,a_{n}\} $ of positive integers and the monoid $\langle \a \rangle$ generated by $\a$. When the  $gcd (a_{1},\dots,a_{n})=1$,   $\langle \a \rangle$ is called a \textit{numerical semigroup} which  contains zero and all but finitely many positive integers.
Let $k$ be an arbitrary field. Consider the $k$- algebra homomorphism $\phi_{\a}:k[x_{1},\dots,x_{n}] \rightarrow k[t]$ given by $\phi_{\a}(x_{i})=t^{a_{i}}$. The image of this map $\phi_{\a}$ is the semigroup ring $k[\a]$ which is isomorphic to the coordinate ring $k[\a]= k[x_{1},\dots,x_{n}]/ I_{\a}$ where $\ker \phi_{\a}=I_{\a}$.  The semigroup ring $k[\a]$ and the ideal $I_{\a}$ become homogeneous with the weighting $\deg(x_i) = a_i, 1\le i\le n$.  The ideal $I_{\a}$ is also the toric ideal in $k[x_1, \dots, x_n]$ and is  generated by a finite set of binomials in $x_i$.  Since the semigroup ring is an integral domain of dimension one, the ideal $I_{\a}$ is a prime ideal of height $n-1$.  The embedding dimension of the semigroup ring is $n$ if and only if $I_{\a} \subset m^2$, $m= (x_1, \dots, x_n)$.   A semigroup is called complete intersection or Gorenstein if the corresponding semigroup ring is complete intersection or Gorenstein respectively. 

Given $  \a  =\{a_{1},\dots,a_{n}\}\subset \N$, for each $i, 1\le i\le n$, there exists a smallest positive integer  $r_{i}$ such that $r_{i}a_{i}=\sum_{j \neq i}r_{ij}a_{j}$.  The $n \times n$ matrix $D(\a):= (r_{ij})$ where $r_{ii}:= -r_{i}$ is called a \textit {principal matrix} associated to $\a$.
 
Although the diagonal entries $-r_i$ are uniquely determined, $r_{ij}$ are not always unique.  In that sense, there can be more than one principal matrix $D(\a)$ for a given $\a$.    Since $D(\a) a^T = 0$,  rank of $D(\a) \leq n-1$.  The sequence of positive integers $\a$ can be recovered from a given principal matrix $A$ of rank $n-1$  by factoring out the g.c.d of the entries of any nonzero column of the adjoint of $A$ and changing the signs to be positive.  Each of the $n$ rows of the principal matrix, corresponds to a binomial $\alpha_i, 1\le i\le n$, known as critical binomials \cite{rafael} as they are part of a minimal generating set of $I_{\a}$. Indeed, the principal matrix will have maximal rank $n-1$, if the  $(\alpha_i,1\le i\le n, x_j)$ is $m$-primary.  As such, the principal matrix of a numerical semigroup contains some of the essence of the information about the semigroup.  This article grew out of our attempts to uncover information about the semigroup or its structure from the principal matrix. 

This leads to the notion of a pseudo principal matrix, which looks like a principal matrix of $\a$, except that the diagonal entries $-r_i$ are not the smallest possible. We prove in Corollary \ref{rank} that an $n\times n$ pseudo principal matrix of size $n$ must have rank at least ${\frac{n}{2}}$.  Similar concepts have been studied in \cite{braz} and a related notion of generalized critical binomial matrix in \cite{Villarrealdegree}.  

We determine the structure of a pseudo principal matrix depending on its rank and use it to characterize the numerical semigroups $\langle \a \rangle$ from their principal matrices of any size in Theorem \ref{structure} and Theorem \ref{rkn-1}.  In particular, numerical semigroups of embedding dimension $4$ and $5$ have been discussed in terms of their principal matrices in detail in section 3.
For the embedding dimension $n=4$ we prove that the principal matrix $D(\a)$ has rank $2$ if and only if $\a= \{a_1, a_2, a_3, a_4\} = d\{c_1,c_2\}\sqcup e\{c_3,c_4\}$ and $D(\a)$ is the concatenation of two $2\times 2$ matrices  $D(c_1,c_2)$ and $D(c_3,c_4)$. 
For the embedding dimension $n=5$ we prove that the principal matrix $D(\a)$ has rank $3$ if and only if either $\a= \{a_1, a_2, a_3, a_4,a_5\} = d\{c_1,c_2,c_3\}\sqcup e\{c_4,c_5\}$ and $D(\a)$ has $3+2$ block structure or $\a= \{a_1, a_2, a_3, a_4,a_5\} = a_{1} \sqcup d\{c_2,c_3\}\sqcup e\{c_4,c_5\}$ and $D(\a)$ has $1+2+2$ block structure. On the other hand, we show that when $n=4$, if the rank of $D(\a)$ is $3$, then there are exactly three possibilities for $A= D(\a)$:(i) $A^TX=0$ has a solution in positive integers or (ii) $\a = d\{c_1, c_2\} \sqcup  \{b_1, b_2\}$ with $b_i \in \langle {c_1, c_2} \rangle$ or (iii).
 $\a = d\{b_1, b_2,b_3\} \sqcup b_4$ is a simple-split gluing and is either a complete intersection or an almost complete intersection.  

Furthermore, we also give a sufficient condition for a pseudo principal matrix of rank $n-1$ to be a principal matrix, in Proposition \ref{pseudoisreal}. This is a direct generalization to $n$ of the $4\times 4$ case in \cite{Be} and \cite{braz}.  

As a result of the general result in embedding dimension $4$ one finds that there is a finite number $\nu(\a)$ of a set of positive integers minimally generating a semigroup with a given principal matrix up to gluing. In the last section \ref{secexamples}, we list some examples to illustrate many of the results. 

\section{Principal matrix and look alikes}

For $I\subset \N$, $|I|$ denotes the number of elements in $I$. 
We write $[r]$ for the set  $\{1, 2, \ldots , r\}$  of first $r$ positive integers and   $|x|$ for the absolute value of $x$. Determinant of a matrix $T$ is denoted by  $\det T$ and the adjoint or adjugate of $T$ as $\adj T$. 
For any set $I$ we write $I -\{a\}$ is the subset of all elements in $I$ except $a$.  
When the context is clear we write $\a$ for the set $\{ a_1, \ldots , a_n\}$ as well as for the vector $\a\in \N^n$. 

 Let $A$ be a matrix.  Given $I, J\subset \N$, with $|I| = |J| $,   $A^I_J $ is the determinant of the submatrix of $A$ consisting of the rows in $I$ and the columns in $J$.
 If $I\subset \{1,2,\cdots, n\}$, then $\Delta_I$ denotes the determinant of the submatrix of $A$ consisting of the rows and columns in $I$. Thus, $\Delta_I = A^I_I$.   We denote the gcd of two integers $a$ and $b$ by  $(a,b)$.

 \begin{definition}
 Let $\a=\{ a_1, \ldots, a_n\}$ be a set of positive integers minimally generating a semigroup $\sa$. 
 $A= \begin{pmatrix}
 -c_1&a_{12}& \ldots & a_{1n}\\
 \vdots& \vdots& \vdots\\
 a_{n1} & a_{n2}& \vdots &-c_n\\
 \end{pmatrix}
 $ is called a principal matrix of $\a$ if $A\a = 0$ and $c_i$ is the smallest positive integer such that $c_ia_i \in \langle{\a -\{a_{i}\}}\rangle, 1\le i\le n\}$.  We denote this by $A = D(\a)$.  
 \end{definition}
 
 The relations $c_ia_i = \sum_{j\neq i} a_{ij}a_j$ are also called critical relations and the associated binomials $x_i^{c_i}-\prod_{j\neq i} x_j^{a_{ij}}$ are called the critical binomials of $\a$.  Clearly $c_i$ are uniquely determined by $\a$ and $\a$ minimally generates the semigroup $\sa$ if and only if $c_i\ge 2$ for all $i$.  
 
The rows of a principal matrix are relatively prime.  If $n>2$, then the principal matrix has rank at least $2$.  For, if the matrix has rank one, then all the rows are multiples of the first row.  But then, all the entries on the diagonal are negative and the entries off the diagonal are non negative. This is impossible if $n>2$.  
\vskip .2truein

Although the diagonal of a principal matrix is unique, the rest of the entries can be different. In fact, it is possible for $\a$ to have two principal matrices of different ranks.  
\begin{example} \rm{
$\begin{pmatrix}
-3&2&0&0&0\\
3&-2&0&0&0\\
0&0&-3&2&0\\
0&0&3&-2&0\\
5&0&1&0&-4\\
\end{pmatrix}
$
and $\begin{pmatrix}
-3&2&0&0&0\\
3&-2&0&0&0\\
2&0&-3&0&1\\
0&0&3&-2&0\\
5&0&1&0&-4\\
\end{pmatrix}
$ 

are both principal matrices for $\{22,33,26,39,34\}$ where one of them has rank $3$ and another has rank $4$.
}
\end{example}
In fact, any $\{a\{2,3\}\sqcup b\{2,3\} \sqcup \{2c\}\}$ where $3b\in \langle{a,c}\rangle$ will exhibit this phenomenon.  Nevertheless, the principal matrix does say something about the nature of the set $\a$. 

It's not necessary that any matrix of the above form (which has negative diagonal entries and positive off-diagonal entries) is a principal matrix associated with a semigroup. 
\begin{example}\label {pseudo} 
\rm{$A=\begin{bmatrix} -4 & 0 & 1 & 1\\1 & -5 & 4 & 0\\0 & 4 &-5 &1\\3 & 1 & 0 & -2 \end{bmatrix}$ looks like a Principal matrix.  
Taking the gcd of the first column of its adjoint, we see that $A\begin{bmatrix}
7\\
11\\
12\\
16\\
\end{bmatrix} = 0$.  But then, $A$ is not principal because the second row should be $[3,-3,1,0]$ and $-3>-5$.
}
\end{example}

We will call such matrices pseudo principal. 
\begin{definition}
An $n\times n$ integral matrix $A=(a_{ij})$ is called pseudo principal matrix, if $a_{ij} \ge 0, i\neq j, a_{ii}<0, 1\le i,j\le n$ and there exist positive integers $ a_1, \ldots, a_n$ such that $A\begin {bmatrix}
a_1\\
\vdots\\
a_n\\
\end{bmatrix}= 0$.  We say $A = P(\a)$, to denote that $A$ is a pseudo principal matrix with $A\a= 0$ for $\a =\{ a_1, \ldots a_n\} \subset \N$.  
\end{definition}

 For a pseudo principal matrix to be principal,  $a_{ii} $ needs to be minimal with respect to $a_{ii}a_i = \sum _{j\neq i}a_{ij}a_j$ and $\a$ minimally generate the semigroup $\sa$. In an actual principal matrix $a_{ii} <-1$, for all $i$.  That is the reason we require $a_{ii}<-1$ in a pseudo principal matrix.  However, many of our results here will go through even if $a_{ii} = -1$.  
 It is true that if $A$ is a pseudo principal matrix none of whose entries are zeros and whose product with $\mathbf{1}^{T}$ is zero, then $A^{T}$ is a pseudo principal matrix (see \cite{Villarrealdegree} for details).  We will prove in section 4, theorem \ref{pseudoisreal} a condition for a pseudo principal matrix to be principal which allows some zero entries.   
 It is not a serious restriction that the matrix is integral, for if it is over rational numbers, we can clear the denominator and then the matrix will satisfy the same conditions required to be a pseudo principal matrix. Although we begin this study looking for the structures of principal matrices, many of the results seem to be good for the general pseudo principal matrices.  

Given a set  $\{a_1, \ldots, a_n \}$  of relatively prime positive integers, we denote by $\a = 
\begin {bmatrix}
a_1\\
\vdots\\
a_n\\
\end{bmatrix}$. We denote the Principal matrix of $\a$ by $D(\a)$ .

Principal matrices of numerical semigroups of embedding dimension $3$ are completely characterized by the results of Herzog \cite{He}.  Principal matrices of $\a^T=[a_1, a_2, a_3]$ are either
$D(\a) =\begin{bmatrix}
-c_1&c_2&0\\
c_1&-c_2&0\\
a_{31}&a_{32}&-c_3\\
\end{bmatrix}$ and $\sa $ is a complete intersection or it is an almost complete intersection such that the principal matrix with all the $2\times 2$ minors are not zero. In the former case, $\{a_1,a_2\}\sqcup a_3 = d\{c_2, c_1\}\sqcup a_3$ and $a_3= a_{31}c_1+a_{32}c_2$. 

We will generalize this in theorems \ref{structure}, \ref{rkn-1} and \ref{pseudoisreal}.  

As another generalization of this result, we have the notion of gluing. 
\begin{definition} Let $\b=\{b_1, \ldots, b_s\}, \c= \{c_1, \ldots, c_{n-s}\}$ minimally generate $\sb$ and $\sc$ respectively.  Then we say $\sa $ is a gluing of $\sb$ and $\sc$ if there are relatively prime positive integers $d$ and $e$ such that $\a = d\{\b\} \sqcup e\{\c\}$,   $d\in \sc, d \notin \c, e \in \sb, e \notin \b$.  
\end{definition} 
When both $\b$ and $\c$ have more than one element, the concatenation of the principal matrices $D(\b)$ and $D(\c)$ may not always be principal matrix of $D(\a)$.  It is indeed a pseudo principal matrix of $\a$.  When one of them, say $\c$ has exactly one element, it is called a simple split and the principal matrix can reflect this, as can be seen from  Theorem \ref{rkn-1}.  The concept of gluing for numerical semigroups has been introduced in \cite{sgBook} and studied for instance in \cite{jpaa19}.  A numerical semigroup is a complete intersection if and only if it is the gluing of two complete intersection numerical semigroups by the result of Delorme \cite{De}. An $n\times n$ principal matrix of a glued semigroup can have maximal rank $n-1$, see Example \ref{ex}.\ref{exprincrankn-1}.  

\begin{lemma}\label{Elim}
 Let $A=(a_{ij})$ be any $m\times n$.  Let $\psi(t) = \Pi_{i=1}^t \Delta_{[i]} $.  
Suppose $\Delta_{[i]} \neq 0,$ for all $i<r$. Then there exists a series  of row operations on $A$ to arrive at

$\hat{A}(r)= \begin {bmatrix}
 a_{11}&a_{12}&\ldots &a_{1q} &\ldots\\
  0&\Delta_{12}&\ldots & A^{12}_{1q} &\ldots\\
  \vdots&\vdots&\vdots&\vdots&\vdots\\
  0&0&\ldots  \Delta_{[r]}&\ldots A^{[r]}_{[r-1],q}&\ldots\\
  \vdots&\vdots&\vdots&\vdots&\vdots\\
  0&0&\ldots A^{[r-1],i}_{[r]}&\ldots A^{[r-1],i}_{[r-1],q}&\ldots\\
  \vdots&\vdots&\vdots&\vdots&\vdots\\
  0&0&\ldots A^{[r-1],n}_{[r]}&\ldots A^{[r-1],n}_{[r-1],q}&\ldots\\
  \end{bmatrix}$.
  If $A$ is a matrix of integers, then so is $\hat{A}(r)$. 
  
 In particular,  if  $A$ is a pseudo principal matrix, with $\Delta_{[i]}\neq 0$, for all $i<r$, then there is an invertible matrix $M$ with $MA=
\hat{A}(r)= \begin {bmatrix}
-c_1&a_{12}&\ldots &a_{1q} &\ldots\\
  0&\Delta_{12}&\ldots & A^{12}_{1q} &\ldots\\
  \vdots&\vdots&\vdots&\vdots&\vdots\\
  0&0&\ldots  \Delta_{[r]}&\ldots A^{[r]}_{[r-1],q}&\ldots\\
  \vdots&\vdots&\vdots&\vdots&\vdots\\
  0&0&\ldots A^{[r-1],i}_{[r]}&\ldots A^{[r-1],i}_{[r-1],q}&\ldots\\
  \vdots&\vdots&\vdots&\vdots&\vdots\\
  0&0&\ldots A^{[r-1],n}_{[r]}&\ldots A^{[r-1],n}_{[r-1],q}&\ldots\\
  \end{bmatrix}$.

  Further, if $I= \{1,\dots,s\}=[s]$, then 
  
  $\Delta_I\hat{A}(r)=\Delta _{[s]} \hat{A}(r)= \Pi _{i=1}^{s}\Delta _{[i]}(A)$ for $s\le r$ and 
  $\Delta _{[s]}\hat{A}(r) = \alpha(s) \Delta_{[s]}, \alpha(s) \neq0$,
  \end{lemma}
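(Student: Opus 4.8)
The plan is to establish Lemma \ref{Elim} by a careful, bookkeeping-heavy induction on $r$, implementing Gaussian elimination in the ``fraction-free'' (Bareiss) style so that all intermediate matrices stay over $\Z$ and the entries are precisely the signed minors $A^{[k],i}_{[k],j}$ advertised in the statement. First I would set up the base case $r=1$ (where $\hat{A}(1)=A$) and then assume that we have produced $\hat{A}(r-1)$ whose row $k\le r-1$ has leading zeros followed by $\Delta_{[k]}$ in column $k$ and entries $A^{[k],i}_{[k],j}$ thereafter, while row $i\ge r-1$ (the ``lower block'') has entries $A^{[r-2],i}_{[r-2],j}$. To pass from $\hat{A}(r-1)$ to $\hat{A}(r)$, for each lower row $i\ge r$ I replace $\mathrm{row}_i$ by $\Delta_{[r-1]}\cdot\mathrm{row}_i - A^{[r-2],i}_{[r-1]}\cdot\mathrm{row}_{r-1}$, which zeroes out column $r-1$ in row $i$; the key identity to verify is that every resulting entry, after dividing by $\Delta_{[r-2]}$, equals the corresponding minor $A^{[r-1],i}_{[r-1],j}$ (with the convention that $A^{\emptyset,i}_{\emptyset,j}=a_{ij}$). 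This divisibility and the explicit formula are exactly the content of the Desnanot--Jacobi / Sylvester determinant identity (the generalized Dodgson condensation identity), which expresses a $k\times k$ minor in terms of $(k-1)$- and $(k-2)$-minors; I would cite or quickly derive it. Since $\Delta_{[i]}\ne 0$ for all $i<r$ by hypothesis, the division is legitimate and the pivots never vanish prematurely, so the row operations are invertible and their product is the matrix $M$; when $A$ has integer entries, the fraction-free guarantee keeps $\hat{A}(r)$ integral, and when $A$ is pseudo principal the $(1,1)$ entry is untouched and stays $-c_1$.

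For the ``Further'' clause I would track the total scaling factor accumulated in the elimination. Each application of the row operation above multiplies the determinant of any $s\times s$ leading submatrix ($s\ge r$) by $\Delta_{[r-1]}$ for each of the $s-(r-1)$ rows below row $r-1$ and then divides by $\Delta_{[r-2]}$ the same number of times; telescoping these factors across $r=2,\dots$ yields, for the leading $[s]\times[s]$ block with $s\le r$, exactly $\det(\hat{A}(r)|_{[s]\times[s]}) = \prod_{i=1}^{s}\Delta_{[i]}(A)$ — this is immediate anyway since $\hat{A}(r)|_{[s]\times[s]}$ is upper triangular with diagonal $(-c_1,\Delta_{[2]}/\Delta_{[1]},\dots)$, wait, more directly its diagonal entries are $a_{11},\Delta_{[2]},\Delta_{[3]}$ divided appropriately, so one reads off $\Delta_{[s]}\hat{A}(r)=\prod_{i=1}^s\Delta_{[i]}(A)$ from the triangular shape. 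For $s>r$ the leading block is no longer triangular, but its first $r-1$ columns below the diagonal are still zero, so expanding along those columns gives $\Delta_{[s]}\hat{A}(r) = \bigl(\prod_{i=1}^{r-1}\Delta_{[i]}(A)\bigr)\cdot(\text{minor of the lower-right block})$; identifying that trailing minor with a nonzero multiple $\alpha(s)$ of $\Delta_{[s]}(A)$ again comes from the Bareiss scaling relation. I would state $\alpha(s)$ explicitly as $\prod_{i=1}^{r-1}\Delta_{[i]}(A)$ divided by a suitable earlier product, or simply note $\alpha(s)=\Delta_{[r]}\hat{A}(r)/\Delta_{[r]}(A)\cdots$ — the precise constant is not needed downstream, only its nonvanishing, which follows from $\Delta_{[i]}(A)\ne 0$ for $i<r$.

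The main obstacle is purely notational: keeping the multi-index superscripts $[k],i$ and subscripts $[k-1],j$ consistent through the induction, and making sure the off-by-one between ``row $r-1$ is the last pivoted row'' and ``row $r$ is where the next pivot will sit'' matches the displayed form of $\hat{A}(r)$ (in which the pivot $\Delta_{[r]}$ already appears in position $(r,r)$, so in fact one more elimination step than $r-1$ pivots has been performed on the pivot row itself). I would handle this by fixing once and for all the convention that $\hat{A}(r)$ is the result of $r-1$ rounds of elimination, verify it against the $r=2$ and $r=3$ cases by hand to catch index errors, and then present the inductive step with the Desnanot--Jacobi identity doing the real work. A secondary subtlety is that $A$ need only be $m\times n$ (not square), so I must phrase the minors $A^{[k],i}_{[k],j}$ for $j$ ranging over all columns and $i$ over all rows $\ge k$, and check that the elimination never requires a column beyond the $n$ available; this is automatic since pivots live in columns $1,\dots,r\le\min(m,n)$.
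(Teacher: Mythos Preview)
Your proposal is correct and follows essentially the same approach as the paper: both argue by induction on $r$, performing fraction-free (Bareiss) Gaussian elimination without row interchanges, and both rely on the same determinantal identity
\[
\Delta_{[r-1]}\,A^{[r-2],k}_{[r-2],q}-A^{[r-2],k}_{[r-1]}\,A^{[r-1]}_{[r-2],q}=\Delta_{[r-2]}\,A^{[r-1],k}_{[r-1],q}
\]
to show that the new entries are again minors of $A$ and that the division by $\Delta_{[r-2]}$ is exact. The only cosmetic difference is that you name this identity (Desnanot--Jacobi/Sylvester) and propose to cite it, whereas the paper states it inline and supplies a direct proof in an appendix via Pl\"ucker relations; your treatment of the ``Further'' clause is also slightly more discursive than the paper's one-line bookkeeping, but the content is the same.
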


 \begin{proof}
We proceed with row operations as in Gaussian elimination, except we will not   do any row interchanges. Indeed, we will not need to do so, thanks to the hypothesis $\Delta_{[i]} \neq 0, i<r$. 
 For $i\ge 2$,  by adding $-a_{i1}/a_{11}$ times the first row to the $i$th row and then multiplying the $i$th  row by $a_{11}$,  we arrive at the matrix 
$\hat{A'}(1)
 = \begin{bmatrix}
  a_{11} &a_{12}&\ldots &a_{1q} &\ldots\\
  0& \Delta_{12}&\ldots &  A^{12}_{1q} &\ldots\\
  \vdots&\vdots&\vdots&\vdots&\vdots\\
  0& A^{1i}_{12}&\ldots& A^{1i}_{1q}\ldots\\
   0& A^{1n}_{12}&\ldots& A^{1n}_{1q}\ldots\\
  \end{bmatrix}$.
  Suppose we have obtained by induction, 
  
  $\hat{A'}(r-1) = \begin{bmatrix}
  a_{11}&a_{12}&\ldots &a_{1q} &\ldots\\
  0&\Delta_{12}&\ldots & A^{12}_{1q} &\ldots\\
   0&0& \Delta_{[p]}&\ldots A^{[p]}_{[p-1],q}&\ldots\\
  \vdots&\vdots&\vdots&\vdots&\vdots\\
  0&0&\ldots \Delta_{[r-1]}&\ldots A^{[r-1]}_{[r-2],q}&\ldots\\
  \vdots&\vdots&\vdots&\vdots&\vdots\\
  0&0&\ldots  A^{[r-2],i}_{[r-1]}&\ldots A^{[r-2],i}_{[r-2],q}&\ldots\\
  \vdots&\vdots&\vdots&\vdots&\vdots\\
  0&0&\ldots  A^{[r-2],n}_{[r-1]}&\ldots A^{[r-2],n}_{[r-2],q}&\ldots\\
  \end{bmatrix}$

 Now, since $\Delta_{[r-1]} \neq 0$, we can continue the row operations to make the entries in the $r-1$st column below.
   
 We use the following relations, which can be verified by direct computation of determinants and say, Plucker relations, 
   
   $$A^{[r-1]}_{[r-1]}A^{[r-2],k}_{[r-2],q}-A^{[r-2],k}_{[r-1]}A^{[r-2],r-1}_{[r-2],q}= A^{[r-2]}_{[r-2]}A^{[r-2],r,k}_{[r-2],r,q}, k,q \ge r -1$$
   
We add a proof in the appendix. 
   
   Then we get the matrix,

$\hat{A}(r)= \begin {bmatrix}
 a_{11}&a_{12}&\ldots &&a_{1q} &\ldots\\
  0&\Delta_{12}&\ldots && A^{12}_{1q} &\ldots\\
  \vdots&\vdots&\vdots&&\vdots&\vdots\\
  0&0&\ldots \Delta_{[r-1]}&&\ldots A^{[r-1]}_{[r-2],q}&\ldots\\
  0&0&\ldots  0&\Delta_{[r]}&\ldots A^{[r]}_{[r-1],q}&\ldots\\
  \vdots&\vdots&\vdots&&\vdots&\vdots\\
  0&0&\ldots 0&A^{[r-1],i}_{[r]}&\ldots A^{[r-1],i}_{[r-1],q}&\ldots\\
  \vdots&\vdots&\vdots&\vdots&\vdots\\
  0&0&\ldots 0&A^{[r-1],n}_{[r]}&\ldots A^{[r-1],n}_{[r-1],q}&\ldots\\
  \end{bmatrix}$

  Finally, in the row operations, we did not permute the rows at all and every time we multiplied the $s$th row onwards by $\Delta _{[s-1]}$ and divided  by $\Delta _{[s-2]}$.  Hence comparing the principal minors, we get 
  
  $$\Delta _{[s]}\hat{A}(r) = \Pi_{i=1}^{s}\Delta _{[i]}, s\le r$$ 
  
  and  $$\Delta _{[s]}\hat{A}(r) = \alpha(s) \Delta_{[s]}, \alpha(s) \neq0$$

 \end{proof}

 \begin{theorem} \label{sign}  
 Let $A = P(\a) $ be a pseudo principal matrix. 
Then, 

1. $A^{I,p}_{I,q}\ge 0, p\neq q$ if and only if $|I| $ is even.  

2.  If $\Delta_I = 0$, then $\Delta_{I,j} = 0$ for all $j$.

3. $\Delta_I \ge 0$ if and only if $|I|$ is even. 
 
\end{theorem}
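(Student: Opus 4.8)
The plan is to transfer everything to the matrix $B:=-A$, which is a \emph{$Z$-matrix} (off-diagonal entries $\le 0$) annihilated by a strictly positive vector; then $B$ and all of its principal submatrices behave like (possibly singular) $M$-matrices, and the three assertions become standard sign facts about such matrices. Concretely, fix $\a>0$ with $A\a=0$. For every $K\subseteq[n]$ the principal submatrix $B_K$ has nonpositive off-diagonal entries, and since $A\a=0$ and $a_{ik}\ge 0$ for $i\ne k$, $a_k>0$,
\[
(B_K\,\a_K)_i \;=\; \sum_{k\in K}(-a_{ik})a_k \;=\; \sum_{k\notin K}a_{ik}a_k \;\ge\;0 \qquad (i\in K),
\]
so $B_K$ is a $Z$-matrix admitting a strictly positive vector $\a_K$ with $B_K\a_K\ge 0$.

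The one real ingredient is an elementary lemma: \emph{if $Q$ is a $Z$-matrix with $Qx>0$ for some $x>0$ (strict entrywise), then $Q$ is invertible, $Q^{-1}\ge 0$, and $\det Q>0$.} For invertibility, if $Qy=0$ with $y\neq 0$ (flip the sign of $y$ so it has a positive entry) one increases $t$ from $0$ in $z(t)=x-ty$ until some coordinate, say the $k$-th, of $z$ first hits $0$ while $z\ge 0$ (necessarily $z\ne 0$, else $Qx=0$): then $(Qz)_k=(Qx)_k>0$ but $(Qz)_k=Q_{kk}\cdot 0+\sum_{j\ne k}Q_{kj}z_j\le 0$, a contradiction. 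A parallel sliding argument shows $Qw\ge 0\Rightarrow w\ge 0$, whence $Q^{-1}\ge 0$; and $\det(Q+t\,\mathrm{Id})\ne 0$ for all $t\ge 0$, combined with $\det(Q+t\,\mathrm{Id})\to+\infty$ as $t\to\infty$, forces $\det Q>0$. Replacing $Q$ by $Q+\epsilon\,\mathrm{Id}$ and letting $\epsilon\to 0^+$ then yields the weak forms I will use: a $Z$-matrix $Q$ with $Qx\ge 0$ for some $x>0$ has $\det Q\ge 0$, and if in addition $Q$ is invertible then $Q^{-1}\ge 0$.

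Part 3 is then immediate from the weak form applied to $Q=B_I$: $\det B_I\ge 0$, i.e.\ $(-1)^{|I|}\Delta_I\ge 0$, so $\Delta_I$ has the sign of $(-1)^{|I|}$ (equality can occur when $|I|$ is even, e.g.\ $\Delta_{[n]}=0$ because $A\a=0$). For Part 2, suppose $\Delta_I=0$ but $\Delta_{I\cup\{j\}}\ne 0$; by Part 3, $\det B_{I\cup\{j\}}>0$, so $B_{I\cup\{j\}}$ is an invertible $Z$-matrix with $B_{I\cup j}\a_{I\cup j}\ge 0$ and $\a_{I\cup j}>0$, hence $B_{I\cup j}^{-1}\ge 0$. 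Jacobi's identity for the complementary minor of the inverse gives $\det B_I=\det(B_{I\cup j})\,\bigl(B_{I\cup j}^{-1}\bigr)_{jj}$, which forces $\bigl(B_{I\cup j}^{-1}\bigr)_{jj}=0$; but then the $(j,j)$ entry of $B_{I\cup j}B_{I\cup j}^{-1}=\mathrm{Id}$ is $\sum_k (B_{I\cup j})_{jk}\bigl(B_{I\cup j}^{-1}\bigr)_{kj}$, in which the $k=j$ term vanishes and the others are each (nonpositive)$\cdot$(nonnegative), so the sum is $\le 0\ne 1$ — contradiction. Hence $\Delta_{I\cup\{j\}}=0$ for every $j$.

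For Part 1 I would run the same machine but extract the sign of a fixed cofactor. Given $p\ne q$ with $p,q\notin I$, put $J=I\cup\{p,q\}$ and let $\widetilde B_J$ be the submatrix of $B$ on the ordered index list consisting of the elements of $I$ in increasing order, followed by $q$, followed by $p$. For $\epsilon>0$, $\widetilde B_J+\epsilon\,\mathrm{Id}$ is an invertible $Z$-matrix with strictly positive image of $\a_J$, so $\adj(\widetilde B_J+\epsilon\,\mathrm{Id})=\det(\widetilde B_J+\epsilon\,\mathrm{Id})\,(\widetilde B_J+\epsilon\,\mathrm{Id})^{-1}$ is a nonnegative matrix (its determinant factor being positive). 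Deleting the second-to-last row (the one indexed by $q$) and the last column (the one indexed by $p$) leaves exactly the submatrix on rows $(I,p)$ and columns $(I,q)$, i.e.\ the matrix whose determinant is $B^{I,p}_{I,q}=(-1)^{|I|+1}A^{I,p}_{I,q}$, apart from an $\epsilon$-shift on its first $|I|$ diagonal entries. The corresponding entry of $\adj(\widetilde B_J+\epsilon\,\mathrm{Id})$ equals $(-1)^{(|I|+1)+(|I|+2)}=-1$ times that determinant, so this perturbed $B^{I,p}_{I,q}$ is $\le 0$; letting $\epsilon\to 0$ gives $(-1)^{|I|}A^{I,p}_{I,q}\ge 0$, i.e.\ the sign of $A^{I,p}_{I,q}$ is $(-1)^{|I|}$. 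The step I expect to be the main obstacle is exactly this last bookkeeping: choosing the order of $J$ so that $A^{I,p}_{I,q}$ shows up as a single cofactor whose sign is $(-1)^{|I|}$ regardless of how $p$ and $q$ sit relative to $I$ (which is also why ``$A^{I,p}_{I,q}$'' in the statement must be read with $p,q$ appended last rather than re-sorted into increasing order); the only other thing to check is the routine point that adding $\epsilon\,\mathrm{Id}$ disturbs neither the $Z$-matrix shape nor the signs of the determinants involved.
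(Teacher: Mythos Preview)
Your proof is correct and takes a genuinely different route from the paper's. The paper argues by a simultaneous induction on $|I|$, using Lemma~\ref{Elim} to row-reduce $A$ to an upper form $\hat A(r)$ whose subdiagonal entries are the minors $A^{[r-1],i}_{[r-1],q}$ and $\Delta_{[r-1],i}$; the relation $\hat A(r)\,\a=0$ with $\a>0$ then forces the sign of each new diagonal entry from the signs of the off-diagonal ones already known, and a cofactor expansion along the last row propagates the sign of $A^{I,k}_{I,j}$ from smaller minors. In contrast, you pass to $B=-A$ and use that each principal submatrix $B_K$ is a $Z$-matrix with $B_K\a_K\ge 0$ for $\a_K>0$, i.e.\ a (possibly singular) $M$-matrix; Parts~3,~2,~1 then become, respectively, the nonnegativity of $\det B_I$, the positivity of the diagonal of a nonnegative inverse (via Jacobi's identity), and the nonnegativity of the adjugate, all obtained by the standard $\epsilon$-perturbation trick.

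Your approach is cleaner and places the result squarely in $M$-matrix theory, avoiding the somewhat delicate elimination identity proved in the Appendix. The paper's approach, on the other hand, is entirely self-contained and, more importantly, produces the explicit reduced matrix $\hat A(r)$ that is reused throughout the rest of the paper (Lemmas~\ref{Elim2}--\ref{Elim3}, Corollary~\ref{minors}, Theorem~\ref{structure}); so while your argument settles Theorem~\ref{sign} more conceptually, it does not by itself yield that machinery. Your remark that $A^{I,p}_{I,q}$ must be read with $p,q$ appended last (rather than sorted back into $I$) is well taken and matches how the paper implicitly uses the notation after reordering so that $I=[r-1]$.
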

\begin{proof}

The proof is by induction on $|I|$.  
In what follows, we will take the number $0$ to be  both  positive and negative. 

Without loss of generality, we can assume $I = \{1,\ldots, t\}$ for some $t= |I|$. 

Suppose $|I| = 1$.  Then $\Delta _I = -c_I<0$ and $A^{I,j}_{I,k} = \det \begin{pmatrix} -c_I& a_{Ik}\\a_{jI}& a_{jk} 
\end{pmatrix} \le 0$.  

Let $|I|= 2$ so that $I = \{ 1,2\}$.  Since $\Delta_1 \neq 0$, so after a row operation, of adding $c_1^{-1}$ times the first row to the kth row, and then multiply the rows by $-c_1$, we get to
$\hat{A}(2) = \begin{bmatrix}
-c_1& a_{12}& a_{13} & \ldots\\
0& \Delta_{12} & A^{12}_{13} & \ldots \\
\vdots &\vdots& \vdots &\vdots\\
\end{bmatrix}$

Since $\hat{A}(2)\a = 0$ with $\a\in \Z_+$ and $A^{1,2}_{1,j} \le 0, j\ge 3$, we conclude $\Delta_{12} \ge 0$.
So, $\Delta_I \ge 0, |I| = 2$.   

By induction, we assume 

1. Sign $A^{I,j}_{I,k} = (-1)^{|I|}, j\neq k \notin I$, $|I|\le r-2$

2.  If $\Delta_I = 0$, then $\Delta_{I,j} = 0$ for all $j$ for $|I| \le r-1$

3. Sign $\Delta _I = (-1)^{|I|}$,  $|I|\le r-1$

We will prove 1 for $r-1$ and 2 and 3 for $r$.  

Let $|I|= r-1$. Again, we may assume $I = \{ 1,2, \ldots, r-1\}$.  

Consider $A^{I,k}_{I,j}$.  Expanding along the $r$th column which is $\{ a_{k1}, \ldots, a_{kr-1}, a_{kj}\}$, we get, 

$$A^{I,k}_{I,j} = a_{kj} \Delta _I + \sum _{i=1}^{r-1} a_{ki} (-1)^{r-i} A^{([r-1]- i),i}_{([r-1]- i),j} (-1)^{r-1-i}$$
$$= a_{kj} \Delta _I - \sum _{i=1}^{r-1} a_{ki}   A^{([r-1]-i) , i}_{([r-1]- i), j}  $$

Now, sign of $a_{kj}\Delta _I = (-1)^{r-1}$ and 
sign of $-A^{{(I- i) ,i}}_{(I-i),j} = (-1)^{r-1}, j\neq i$

Hence sign $A^{I,k}_{I,j} = (-1)^{|I|}, j\neq k$, as desired. 

Next suppose $\Delta_{[t]} = 0$ for some   $t\le r$.  Let $s$ be the smallest  number such that $\Delta_{[s]} = 0$.  By lemma \ref{Elim}, $\hat{A}(s) \a=0$, for $\a\in (\Z_+)^n$ .  We have already shown that for all $j\neq k\ge s$, the sign of  $A^{[s-1]j}_{[s-1]k} = (-1)^{|I|} $.  Since $\Delta _{[s]} =0$, we must have $A^{[s-1]j}_{[s-1]k}  =0$ and hence the entire $s$th row of $\hat{A}(s)$ is zero.  Since the principal minors of $A$ and $\hat{A}(s)$ are non zero multiples of each other, we must have $\Delta_{[t]}=0, t\ge s$.  Thus, we have shown that $2$ holds for $|I|=r$.  

It remains to prove $3$, for $|I|=r$.  Let $I= \{ 1, 2,\ldots r\}$. If $\Delta _I = 0$, we are done.  If it is not zero, then   $\Delta _{1,2, \dots, r-1}$ is not zero. By lemma \ref{Elim}, $A$ is row equivalent to $\hat{A}(r)$ whose $r$-th row is 
$[0\ldots 0, \Delta_I \ldots A^{1,\ldots r-1, r}_{1, \ldots r-1,j} \ldots]$.  Since the sign of $A^{1,\ldots r-1, r}_{1, \ldots r-1,j}= (-1)^{r-1}$, as we proved above, we get the sign of $\Delta_I = (-1)^r$ and the induction is complete. 

\end{proof}

\begin{remark}\label{vanishing}
As a corollary to the proof of the above theorem, we see that $$\Delta_{[r]} = 0 \implies \Delta_{[t]}=0, t\ge r$$

\end{remark}

\begin{lemma}\label{Elim2}
Let $A= P(\a)$ be a pseudo principal matrix .  Suppose $\Delta_{1,\ldots r-1} \neq 0$. Then there exists a sequence of row operations on $A$ to arrive at $\hat{A}(r) $ whose $(p,q)$th entry is $$\begin{matrix}  
 |A^{[p]}_{[p-1],q}|, p\le r, q> p\\
 -|\Delta_{[p]}|, p=q\le r\\
 0, q\le r-1, q < p \\
  |A^{[r-1],p}_{[r-1],q}|, r\le p,q, p\neq q \\
  -|\Delta{[r-1],p}|, r\le p=q\\
 \end{matrix}$$
 In other words, 
  $$\hat{A}(r)=\begin{bmatrix}
  -c_1&a_{12}&\ldots &a_{1q} &\ldots\\
  0&-|\Delta_{12}|&\ldots &|A^{12}_{1q}|&\ldots\\
  \vdots&\vdots&\vdots&\vdots&\vdots\\
  0&0&\ldots-|\Delta_{[r]}|&\ldots |A^{[r]}_{[r-1],q}|&\ldots\\
  \vdots&\vdots&\vdots&\vdots&\vdots\\
  0&0&\ldots -|\Delta_{[r-1],i}|&\ldots |A^{[r-1],i}_{[r-1],q}|&\ldots\\
  \vdots&\vdots&\vdots&\vdots&\vdots\\
  0&0&\ldots -|\Delta_{[r-1],n}|&\ldots |A^{[r-1],n}_{[r-1],q}|&\ldots\\
  \end{bmatrix}
  $$
  
\end{lemma}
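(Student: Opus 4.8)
The plan is to read this off from Lemma~\ref{Elim} and Theorem~\ref{sign} with essentially no new computation: Lemma~\ref{Elim} supplies the pattern of entries of $\hat{A}(r)$, Theorem~\ref{sign} pins down all their signs, and one then multiplies suitable rows by $-1$ to reach the normalized form.

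First I would note that the hypothesis $\Delta_{1,\dots,r-1}\neq 0$ is exactly what is needed to invoke Lemma~\ref{Elim}: by Remark~\ref{vanishing} (equivalently Theorem~\ref{sign}(2)), if $\Delta_{[i]}=0$ for some $i\le r-1$ then $\Delta_{[r-1]}=0$, so in fact $\Delta_{[i]}\neq 0$ for all $i<r$. Hence there is an invertible integer matrix $M$ with $MA=\hat{A}(r)$ as displayed in Lemma~\ref{Elim}: row $1$ is left unchanged and equals $(-c_1,a_{12},\dots,a_{1n})$; for $2\le p\le r$, row $p$ has $(p,q)$-entry $0$ when $q<p$ and $A^{[p]}_{[p-1],q}=A^{[p-1],p}_{[p-1],q}$ when $q\ge p$ (this being $\Delta_{[p]}$ when $q=p$); and for $r<p\le n$, row $p$ has $(p,q)$-entry $0$ when $q\le r-1$ and $A^{[r-1],p}_{[r-1],q}$ when $q\ge r$ (this being $\Delta_{[r-1],p}$ when $q=p$). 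At the overlap $p=r$ the two descriptions agree, since $\Delta_{[r]}=\Delta_{[r-1],r}$ and $A^{[r]}_{[r-1],q}=A^{[r-1],r}_{[r-1],q}$.

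Next I would read off signs from Theorem~\ref{sign}. In a row $p$ with $1\le p\le r$, the diagonal entry $\Delta_{[p]}$ has the sign of $(-1)^p$ by part~(3), while each entry $A^{[p-1],p}_{[p-1],q}$ with $q>p$ has the sign of $(-1)^{p-1}$ by part~(1); thus within such a row the diagonal entry and the entries to its right have opposite signs, and multiplying row $p$ by $-1$ precisely when $p$ is even turns the diagonal entry into $-|\Delta_{[p]}|$ and every entry to its right into $+|A^{[p]}_{[p-1],q}|$ (row $1$ already has $-c_1=-|\Delta_{[1]}|<0$ on the diagonal and nonnegative entries elsewhere, so it needs no change, consistent with the rule ``flip iff $p$ is even''). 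Likewise, in a row $p$ with $r<p\le n$, the diagonal entry $\Delta_{[r-1],p}$ has the sign of $(-1)^r$ (part~(3), since $|[r-1]\cup\{p\}|=r$) and each entry $A^{[r-1],p}_{[r-1],q}$ with $q\ge r$, $q\neq p$, has the sign of $(-1)^{r-1}$ (part~(1)); multiplying each such row by $-1$ precisely when $r$ is even normalizes it in the same way. Since $0=-|0|=+|0|$, any of the minors $\Delta_{[p]}$, $\Delta_{[r-1],p}$, $A^{[p]}_{[p-1],q}$, $A^{[r-1],p}_{[r-1],q}$ that happen to vanish cause no trouble, and these sign changes (being scalings by $\pm1$) preserve integrality. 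Composing them with the row operations of Lemma~\ref{Elim} yields exactly the matrix in the statement. The one point that needs care, and it is bookkeeping rather than a genuine obstacle, is matching the two index regimes --- the echelon part in rows and columns $\le r-1$, where the $(p,q)$-entry is the minor on rows $[p]$ and columns $[p-1]\cup\{q\}$, against the still-unreduced lower-right block in rows and columns $\ge r$, where the relevant minors live on rows $[r-1]\cup\{p\}$ and columns $[r-1]\cup\{q\}$ --- together with checking they coincide at $p=r$, after which the case analysis above covers every $(p,q)$ and the displayed formula follows.
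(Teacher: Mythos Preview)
Your proposal is correct and follows essentially the same approach as the paper's own proof: invoke Theorem~\ref{sign}(2) (your Remark~\ref{vanishing}) to get $\Delta_{[i]}\neq 0$ for all $i<r$, apply Lemma~\ref{Elim} to obtain the minor-entry form of $\hat{A}(r)$, and then use Theorem~\ref{sign}(1),(3) to pin down the signs. Your write-up is in fact more explicit than the paper's about exactly which rows get multiplied by $-1$ (those with $p$ even for $p\le r$, those with $r$ even for $p>r$), but this is added detail rather than a different argument.
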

\begin{proof}
 Since $\Delta_{1,\ldots, r-1} \neq 0$, we get from theorem \ref{sign}
$\Delta_{1,..t} \neq 0, t\le r-1$.  Hence by Lemma \ref{Elim}, we can get to an $\hat{A}(r)$. Finally, by theorem \ref{sign},  $A^{1,2,...r-1,p}_{1,2...r-1,q}$ and $A^{1,2,...r-1,p}_{1,2...r-1,p}$ have opposite signs and 
$A^{[p]}_{1,2...r-1,q}$ and $A^{1,2,...r-1,p}_{1,2...r-1,p}$ have opposite signs.  So we arrive at $\hat{A}(r)$ as desired.
\end{proof}
\begin{lemma}\label{Elim3}
Let $A=P(\a)$ be a pseudo principal matrix. Suppose $\Delta_{[r-1]} \neq 0$ and $\Delta_{[r]}=0$.  Then $a_{rq}= 0, q\ge r+1$ and $a_{ri}a_{iq} =0, q\ge r+1$, for all $i <r$. 
\end{lemma}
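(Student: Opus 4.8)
The plan is to distil from the hypotheses the single determinantal fact that $A^{[r]}_{[r-1],q}=0$ for every $q\ge r+1$, and then to feed this into two Laplace expansions in which, by Theorem \ref{sign}, every term carries one and the same sign and hence must vanish individually.

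First, since $\Delta_{[r-1]}\neq 0$, Lemma \ref{Elim2} applies and yields an invertible $M$ with $MA=\hat A(r)$, so in particular $\hat A(r)\,\a=0$ with $\a\in(\Z_+)^n$. The $r$-th row of $\hat A(r)$ is $\big(0,\dots,0,-|\Delta_{[r]}|,|A^{[r]}_{[r-1],r+1}|,\dots,|A^{[r]}_{[r-1],n}|\big)$; since $\Delta_{[r]}=0$, all its entries are $\ge 0$, and pairing this row with the strictly positive vector $\a$ forces each entry to be $0$. Hence $A^{[r]}_{[r-1],q}=0$ for all $q\ge r+1$ --- this is exactly the step used inside the proof of Theorem \ref{sign}(2) with $s=r$.

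Now fix $q\ge r+1$ and expand the vanishing determinant $A^{[r]}_{[r-1],q}$ along its last column (the one indexed by $q$):
$$\sum_{k=1}^{r}(-1)^{k+r}\,a_{kq}\,A^{[r]\setminus\{k\}}_{[r-1]}=0,\qquad\text{where }A^{[r]\setminus\{r\}}_{[r-1]}=\Delta_{[r-1]}.$$
By Theorem \ref{sign} the sign of $A^{[r]\setminus\{k\}}_{[r-1]}$ is $(-1)^{k+1}$ for each $k$, so every summand $(-1)^{k+r}a_{kq}A^{[r]\setminus\{k\}}_{[r-1]}$ has the common sign $(-1)^{r+1}$ (each $a_{kq}\ge 0$, since $q\neq k$); as the sum is $0$, each summand is $0$. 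The $k=r$ term gives $a_{rq}\Delta_{[r-1]}=0$, hence $a_{rq}=0$, which is the first assertion. For the second, fix $i<r$; if $a_{iq}=0$ there is nothing to prove, and otherwise the $k=i$ term forces $A^{[r]\setminus\{i\}}_{[r-1]}=0$. Expand this determinant along its bottom row, which is row $r$ of $A$: this gives $\sum_{j=1}^{r-1}(-1)^{(r-1)+j}a_{rj}H_j=0$, where $H_j$ denotes the minor on rows $[r-1]\setminus\{i\}$ and columns $[r-1]\setminus\{j\}$, so that $H_i=\Delta_{[r-1]\setminus\{i\}}$. Once more Theorem \ref{sign} makes all the summands share one sign, so each is $0$; the $j=i$ term yields $a_{ri}\Delta_{[r-1]\setminus\{i\}}=0$, and $\Delta_{[r-1]\setminus\{i\}}\neq 0$ (otherwise Theorem \ref{sign}(2) would give $\Delta_{[r-1]}=0$), whence $a_{ri}=0$ and therefore $a_{ri}a_{iq}=0$.

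The one genuine obstacle is the sign bookkeeping in these two expansions: one has to confirm that, after combining the cofactor signs $(-1)^{k+r}$ (respectively $(-1)^{(r-1)+j}$), the transposition signs needed to bring each minor into the normal form $A^{I,p}_{I,q}$ of Theorem \ref{sign}, and the minor signs $(-1)^{|I|}$ themselves, every term in the expansion really does have the same sign. This is the only computation required, and it runs parallel to the one already carried out in the proof of Theorem \ref{sign}; everything else is formal.
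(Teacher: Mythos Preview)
Your proof is correct and follows essentially the same strategy as the paper's: reduce to $A^{[r]}_{[r-1],q}=0$ via Lemma~\ref{Elim2}, then perform Laplace expansions in which Theorem~\ref{sign} forces all terms to share a sign and hence vanish individually. The only cosmetic difference is that the paper expands $A^{[r]}_{[r-1],q}$ along its last \emph{row} (row $r$) rather than its last column, obtaining $a_{rq}\Delta_{[r-1]}-\sum_{i<r}a_{ri}A^{[r-1]\setminus\{i\},i}_{[r-1]\setminus\{i\},q}=0$; it then deduces $a_{ri}\ne 0\Rightarrow a_{iq}=0$ (by reordering so that $i$ sits in position $r-1$ and repeating the expansion), which is the contrapositive of your $a_{iq}\ne 0\Rightarrow a_{ri}=0$. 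The sign bookkeeping you flag as the only nontrivial point does work out, and is exactly parallel to the computation already carried out inside the proof of Theorem~\ref{sign}.
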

\begin{proof}
We have $\hat {A}(r)$ as in the lemma \ref {Elim2}.  Since $\hat {A}(r)\a=0$ and $\a $ has positive entries, considering the $r$th  row of $\hat {A}(r)$, we get 

$$ -|\Delta_{[r]}|a_r+ \sum_{q=r+1}^n|A^{[r]}_{[r-1],q}|a_q=0$$

Since $\Delta_{[r]}=0$, 
$$  \sum_{q=r+1}^n|A^{[r]}_{[r-1],q}|a_q=0$$ and hence $$A^{[r]}_{[r-1],q}=0, q\ge r+1$$

But $$A^{[r]}_{[r-1],q}= a_{rq}\Delta_{[r-1]}-\sum_{i=1}^{r-1}a_{ri}A^{[r-1]-i,i}_{[r-1]-i,q}=0.$$

Since all the terms in this sum have the same sign $(-1)^{r-1}$, each of the term is zero.  But $\Delta_{[r-1]}\neq 0$.  So, $a_{rq}= 0, q\ge r+1$. 

Further, if $a_{ri}\neq 0$ for some $i<r$,  then $ A^{[r-1]-i,i}_{[r-1]-i,q}= 0$.  

 Hence applying the above argument, with a reordering as, $1,2,  i-1, i+1, ..r-1,i$, we get $a_{iq}=0, q\ge r+1$.  
\end{proof}
\begin{corollary}\label{minors}
Let $A$ be an $n\times n$ pseudo principal matrix of rank $r$. Then $A$  has a principal minor of size $r$ that is not zero.  If rank $A \le n-2$, then there is a principal minor of size $r$ that is zero. 
\end{corollary}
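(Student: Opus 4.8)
The plan is to apply Lemma~\ref{Elim2}, after a harmless simultaneous permutation of rows and columns of $A$ (which preserves being pseudo principal, preserves the rank, and preserves the set of sizes of non-vanishing principal minors), to put $A$ in the block upper-triangular shape $\begin{pmatrix} T & * \\ 0 & B\end{pmatrix}$, where $T$ is an invertible upper-triangular block whose diagonal entries are the successive leading principal minors $-|\Delta_{[i]}|$, the block below $T$ is zero, and the diagonal entries of the trailing block $B$ are, up to sign, exactly the principal minors of $A$ of one larger size, while the off-diagonal entries of $B$ are nonnegative. Since left multiplication by the invertible matrix produced in Lemma~\ref{Elim} does not change the rank, $\operatorname{rank} A$ equals the size of $T$ plus $\operatorname{rank} B$, and both parts of the corollary become assertions about $B$.

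For the first part, let $s$ be the largest size of a non-vanishing principal minor of $A$; this is well defined because $\Delta_{\{i\}}=a_{ii}<0$. A non-vanishing $s\times s$ minor forces $\operatorname{rank} A\ge s$, so $s\le r$, and if $s=n$ then $\det A=\Delta_{[n]}\ne 0$ and $r=n=s$; so assume $s<n$. Permuting, we may take $\Delta_{[s]}\ne 0$, and then Theorem~\ref{sign}(2) gives $\Delta_{[i]}\ne 0$ for all $i\le s$, so Lemma~\ref{Elim2} applies with parameter $s+1$: it produces $\hat A(s+1)=\begin{pmatrix} T & * \\ 0 & B\end{pmatrix}$, row-equivalent to $A$, with $T$ upper triangular $s\times s$ of nonzero diagonal, and $B$ the $(n-s)\times(n-s)$ block on rows and columns $\{s+1,\dots,n\}$ with $B_{pp}=-|\Delta_{[s],p}|$ and $B_{pq}=|A^{[s],p}_{[s],q}|\ge 0$ for $p\ne q$. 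By maximality of $s$, each $\Delta_{[s],p}=0$, so $B$ has zero diagonal; since $\hat A(s+1)\a=0$, the $p$-th row of $B$ reads $\sum_{q\ne p}|A^{[s],p}_{[s],q}|\,a_q=0$ with all summands $\ge 0$ and all $a_q>0$, whence $B=0$. Therefore $r=\operatorname{rank} A=\operatorname{rank}\hat A(s+1)=s$, and $\Delta_{[r]}\ne 0$ is the desired non-vanishing principal minor of size $r$.

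For the second part, suppose $r\le n-2$ and, for contradiction, that every principal minor of $A$ of size $r$ is non-vanishing. Then by Theorem~\ref{sign}(2) every principal minor of size at most $r$ is non-vanishing, so $\Delta_{[i]}\ne 0$ for $i\le r-1$, and Lemma~\ref{Elim2} applies with parameter $r$, producing $\hat A(r)=\begin{pmatrix} T'' & * \\ 0 & B''\end{pmatrix}$, row-equivalent to $A$, with $T''$ invertible upper triangular of size $r-1$ and $B''$ the $(n-r+1)\times(n-r+1)$ block on rows and columns $\{r,\dots,n\}$ with $B''_{pp}=-|\Delta_{[r-1],p}|$ and $B''_{pq}=|A^{[r-1],p}_{[r-1],q}|$ for $p\ne q$. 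Each $[r-1]\cup\{p\}$ has size $r$, so by hypothesis $B''_{pp}<0$ for every $p$; together with $B''_{pq}\ge 0$ for $p\ne q$ this makes $B''$ a square matrix of size $n-r+1\ge 3$ with strictly negative diagonal and nonnegative off-diagonal entries. By the elementary sign argument given just after the definition of principal matrix — a nonzero rank-one matrix has all rows proportional to a single nonzero row, and no such matrix of size $>2$ can have negative diagonal and nonnegative off-diagonal entries — we get $\operatorname{rank} B''\ge 2$. But $r=\operatorname{rank} A=\operatorname{rank}\hat A(r)=(r-1)+\operatorname{rank} B''$ forces $\operatorname{rank} B''=1$, a contradiction; hence some principal minor of $A$ of size $r$ vanishes.

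The step I expect to be the real work is the index bookkeeping needed to extract the block form from Lemma~\ref{Elim2}: one must check that the leading $(r-1)\times(r-1)$ (respectively $s\times s$) block is genuinely upper triangular with the $-|\Delta_{[i]}|$ on its diagonal, that the block directly below it is zero, and — the hinge of the whole argument — that the diagonal entries of the trailing block $B$ (respectively $B''$) are precisely the size-$(s+1)$ (respectively size-$r$) principal minors of $A$ up to sign, so that maximality of $s$, respectively the contradiction hypothesis, applies to them. Once the block form is in hand, both halves follow from invariance of rank under row operations plus one positivity/sign observation on the trailing block.
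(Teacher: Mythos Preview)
Your proof is correct and follows essentially the same approach as the paper's: for the first part both arguments take the maximal size $s$ of a nonvanishing principal minor, apply Lemma~\ref{Elim2} with parameter $s+1$, and use $\hat A(s+1)\a=0$ together with nonnegativity to kill the trailing block; for the second part both apply Lemma~\ref{Elim2} with parameter $r$, observe that the trailing $(n-r+1)\times(n-r+1)$ block has strictly negative diagonal and nonnegative off-diagonal, and invoke the rank--sign incompatibility for size $\ge 3$ (the paper phrases this as ``there can only be two nonzero rows in $T_4$'' while you phrase it as $\operatorname{rank} B''\ge 2$, but these are the same observation). Your write-up is, if anything, a bit more explicit about why $\operatorname{rank}\hat A(r)=(r-1)+\operatorname{rank} B''$ and about invoking Theorem~\ref{sign}(2) in contrapositive form.
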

\begin{proof}
Suppose $t-1$ is the maximal size of a nonzero principal minor of $A$.  After reordering $\a$, we can apply lemma \ref{Elim2} to get $\hat{A}(t)$. In $\hat{A}(t)$, all principal minors of size $t$ must vanish. Hence all the diagonal entries of $\hat{A}(t)$, in rows $\ge t$ are zero.  But by theorem \ref {sign}, this implies the entire rows after $t-1$ are zeros and the rank of $A$ = rank $\hat{A}(t)$ is $t-1$.  Thus, there is always a nonzero maximal minor of size rank $A$. 

\vskip .2truein

Now suppose rank $A = r$ and all principal minors of size $r$ are not zero. Then we get $\hat{A}(r)$ which has nonzero diagonal entries.  
But $$\hat{A}(r) = \left[\begin{matrix}
T_1 & T_2\\
0_{n-r\times r-1} & T_4\\
\end{matrix}
\right]$$
 where $T_1$ is an $r-1 \times r-1$ upper triangular matrix with nonzero diagonal entries.  Hence we must have rank $T_4=1$ and hence every row after $t\ge r$ is a multiple of the $r$-th row.  However, the entries on the diagonal $\Delta_I$ are of opposite signs to the rest of the entries in the row and there is at least one nonzero entry above the diagonal in each row. So, there can only be two nonzero rows in $T_4$.  Hence $r-1+2 = n$ or $r = n-1$. Hence if rank $A \le n-2$, there must be a principal minor of size $r$ that is zero. 
\end{proof}
 \begin{corollary}\label{rank}
 If $A= P(\a)$ or $D(\a)$, then rank $A \ge \frac{n}{2}$.  
 \end{corollary}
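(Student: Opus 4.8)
The plan is to establish the stronger-looking but equivalent inequality $\operatorname{rank}(A) \ge n - \operatorname{rank}(A)$, which rearranges to $\operatorname{rank}(A) \ge n/2$. Since a principal matrix is in particular pseudo principal, it is enough to treat $A = P(\a)$; write $r = \operatorname{rank}(A)$ and note $r \le n-1$ because $A\a = 0$ with $\a \ne 0$. By Corollary \ref{minors} there is a nonzero principal minor of size $r$, and after simultaneously permuting the rows and columns of $A$ — which keeps it pseudo principal, now annihilating the correspondingly permuted positive integer vector — I may assume $\Delta_{[r]} \ne 0$.

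The key step is to show that the trailing submatrix $B = (a_{pq})_{r < p, q \le n}$ is diagonal. To see this, fix an index $j$ with $r < j \le n$ and permute the indices into the order $1, 2, \dots, r, j, \dots$, with the remaining indices in any order. In this ordering $\Delta_{\{1,\dots,r\}} = \Delta_{[r]} \ne 0$, while $\Delta_{\{1,\dots,r,j\}} = 0$ since it is a principal minor of size $r+1 > \operatorname{rank}(A)$. Hence Lemma \ref{Elim3}, applied with its ``$r$'' taken to be $r+1$, forces $a_{jk} = 0$ for every index $k$ appearing after $j$ in this order, that is, for every $k \in \{r+1, \dots, n\} \setminus \{j\}$. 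Letting $j$ range over $\{r+1, \dots, n\}$ shows that every off-diagonal entry of $B$ vanishes; the diagonal entries $a_{pp}$ are nonzero (indeed negative) because $A$ is pseudo principal, so $B$ is invertible.

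To finish, consider the $n-r$ rows of $A$ indexed by $\{r+1, \dots, n\}$. Restricting each of them to the columns indexed by $\{r+1, \dots, n\}$ gives the rows of the invertible matrix $B$, which are linearly independent; hence these $n-r$ rows of $A$ are themselves linearly independent, so $\operatorname{rank}(A) \ge n - r$. Combined with $r = \operatorname{rank}(A)$ this yields $2r \ge n$, which is the assertion.

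The main obstacle — really the only non-formal point — is justifying the reduction to the diagonal block $B$: one has to be certain that Lemma \ref{Elim3} may legitimately be invoked after each of the order-preserving-on-$[r]$ permutations, the decisive fact being that the hypothesis $\Delta_{\{1,\dots,r,j\}} = 0$ holds for the trivial reason that this minor has size exceeding $\operatorname{rank}(A)$. After that, the invertibility of $B$ and the rank count are immediate. It is worth recording the boundary behaviour: when $r = n-1$ the block $B$ is $1 \times 1$ and the argument degenerates to the obvious statement that row $n$ of $A$ is nonzero, so the genuine content lies in the range $r \le n-2$, exactly where Corollary \ref{minors} supplies a nonzero principal $r$-minor.
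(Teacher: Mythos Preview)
Your proof is correct and follows essentially the same approach as the paper: both reorder so that $\Delta_{[r]}\neq 0$, use the vanishing of all size-$(r+1)$ principal minors together with the sign machinery to constrain the bottom-right $(n-r)\times(n-r)$ block, and conclude it has rank $n-r$. The only cosmetic difference is that the paper applies the sign expansion from Theorem~\ref{sign} directly (obtaining $a_{pq}=0$ for $r<p<q$, hence a lower-triangular block), whereas you invoke Lemma~\ref{Elim3} after cycling each $j>r$ into position $r+1$, which yields the slightly stronger conclusion that the block is diagonal.
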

 \begin{proof}
 If rank $A$ = $n-1$, it is $\ge \frac{n}{2}$.  Let  $r = \mbox{rank} \ A \le n-2$.   Then by corollary \ref{minors}, there is a nonzero principal minor of size $r$.  Reorder $\a$ to make this the top left minor. So, we have the matrix $\hat{A}(r+1)$.  Since all the $r+1$ minors are zero, we get $A^{1,\ldots r ,p}_{1,..r,q} =0$.  Let $r+1\le p< q$. Then this determinant is the sum of terms of the same sign. Hence they must all be zero.  So, $a_{pq}\Delta_{1,\ldots r}=0$.  But  $\Delta_{1,\ldots r} \neq 0$.  So, $a_{pq} = 0, r+1\le p<q$. Hence 
 $$A = \left[
 \begin{matrix}
 T_1  &T_2\\
 T_3 &T_4\\
 \end{matrix}
  \right],$$ where $T_1$ is $r\times r$ and $T_4$ is $n-r\times n-r$ lower triangular matrix with  negative entries on the diagonal and hence is of rank $n-r$.  So, rank $A \ge n-r$.  So, $r\ge n/2$.
 
 \end{proof}
 
 In the following theorem, we give the structure of a principal matrix of rank less than $n-1$. 
 \vskip .2truein
 \begin{theorem}\label{structure}Let $A=P(\a)$ be an $n\times n$  pseudo principal matrix.   Suppose the rank  $A =n-p\le n-2$.
 Then there exists integers  $r_1, r_2, \ldots r_p \ge 2 $ such that $ \sum r_i\le m$, and after a possible reordering $\a = \r_1\sqcup \r_2 \sqcup  \ldots \sqcup \r_p \sqcup \r_{p+1}$   ,where $|\r_i| = r_i,1\le i\le p$ and 
 $$A = \begin {bmatrix}
 D & \0 \\
 A_1& A_2\\
 \end{bmatrix}$$ 
 where $D$ is the concatenation of $P(\r_i) $, for all $1\le i \le p$.
 We say such a principal matrix $D(\a)$ and the semigroup $<\a>$ is of type $r_1+r_2+ \ldots+r_p+1+1+...$, where $r_1+\ldots +r_p+1+1+...= n$.  If $A= D(\a)$ is principal, then $D$ is the concatenation of $D(\r_i)$, for all $1\le i \le p$.
 \end{theorem}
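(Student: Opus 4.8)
The plan is to induct on $n$ and to read the blocks off from the \emph{saturated} subsets of $[n]$. Call $S\subseteq[n]$ saturated (for $A$) if $a_{ij}=0$ whenever $i\in S$, $j\notin S$; equivalently, after relabelling so the indices of $S$ come first, $A=\left[\begin{smallmatrix}A|_S&\0\\ \ast&\ast\end{smallmatrix}\right]$, and since $A\a=0$ the block $A|_S$ satisfies $A|_S\,\a_S=0$, so it is a pseudo principal matrix $P(\r)$ with $\r=\{a_i:i\in S\}$. Note $[n]$ is saturated, an intersection of saturated sets is saturated, and no singleton is saturated (a $1\times1$ pseudo principal matrix would force $-c_ia_i=0$). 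Hence the minimal nonempty saturated sets are pairwise disjoint and each has at least two elements. The goal is to show that, when $\operatorname{rank}A=n-p\le n-2$, there are exactly $p$ of them, say $\r_1,\dots,\r_p$; then, putting the remaining indices into $\r_{p+1}$ and relabelling $\a=\r_1\sqcup\cdots\sqcup\r_p\sqcup\r_{p+1}$, the saturation of each $\r_i$ gives $a_{uv}=0$ for $u\in\r_i$, $v\notin\r_i$, which is exactly the asserted block form with $D$ the concatenation of the $P(\r_i)=A|_{\r_i}$.

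The key step is to produce a proper saturated set. Let $s$ be the least size of a vanishing principal minor of $A$. Since every $1\times1$ principal minor is some $-c_i\ne0$, and since Corollary \ref{minors} gives a vanishing principal minor of size $\operatorname{rank}A$, we have $2\le s\le n-p<n$. Relabel so $\Delta_{[s]}=0$; by minimality of $s$ every principal minor of size $<s$ is nonzero, and this persists after any relabelling inside $\{1,\dots,s\}$ (which does not change $\Delta_{[s]}=0$). Hence Lemma \ref{Elim3} applies with $\{1,\dots,s\}$ in the role of $[r]$ after moving any chosen $k\le s$ into the last of these positions, and yields $a_{kq}=0$ for all $q>s$; letting $k$ run over $\{1,\dots,s\}$ shows that $\{1,\dots,s\}$ is saturated. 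So $A$ has a proper nonempty saturated set, $[n]$ is not minimal, and every minimal nonempty saturated set has size between $2$ and $n-1$. Listing these as $\r_1,\dots,\r_m$, putting $\r_{m+1}:=[n]\setminus\bigcup_i\r_i$ and relabelling accordingly, we get $A=\left[\begin{smallmatrix}D&\0\\ A_1&A_2\end{smallmatrix}\right]$ with $D$ the concatenation of $A|_{\r_i}=P(\r_i)$ and $A_2=A|_{\r_{m+1}}$. Each $A|_{\r_i}$ has corank exactly $1$: it is pseudo principal of size $<n$, so if its corank were $\ge 2$ the induction hypothesis would decompose it, giving it a proper nonempty saturated subset $S\subseteq\r_i$; but $S$ is then also saturated for $A$ (an edge out of $S$ either stays inside $\r_i$, excluded by saturation of $S$ in $A|_{\r_i}$, or leaves $\r_i$, excluded by saturation of $\r_i$ in $A$), contradicting minimality of $\r_i$. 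Hence $\operatorname{rank}D=\sum_i(|\r_i|-1)=n-|\r_{m+1}|-m$.

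It remains to prove $m=p$, i.e. that $A_2$ has full rank $|\r_{m+1}|$. From the block form $\operatorname{rank}A\le\operatorname{rank}D+|\r_{m+1}|=n-m$, so $p\ge m$; the content is the reverse inequality, and I expect it to be the hard part. Suppose $A_2$ were rank deficient. We have $A_2\,\a_{\r_{m+1}}=-A_1\a'$ (with $\a'$ the vector of the remaining coordinates), which is entrywise $\le 0$ since $A_1\ge 0$ and $\a>0$, and is nonzero because $\r_{m+1}$ is not saturated (else it would contain a minimal saturated set) so $A_1\ne 0$. Put $A_2$ in block lower triangular form along the strongly connected components of its off‑diagonal support; for each diagonal block $B$, the corresponding coordinates $w>0$ of $\a$ satisfy $Bw\le 0$, and if $Bw=0$ then, by the same one‑sidedness argument, all entries of $A$ in those rows and in the columns outside $\r_{m+1}$ must vanish, producing a nonempty saturated subset of $\r_{m+1}$ — impossible. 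So $Bw\le 0$, $Bw\ne 0$, for a strictly positive $w$; since $B$ is $1\times1$ or irreducible and $-B$ has positive diagonal and nonpositive off‑diagonal, an elementary maximum–ratio argument forces $B$ nonsingular. Thus $A_2$ has full rank, $\operatorname{rank}A=n-m$, hence $m=p$, and $\r_1,\dots,\r_p,\ \r_{p+1}:=\r_{m+1}$ give the decomposition, with $\sum_i|\r_i|=n-|\r_{p+1}|\le n$.

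Finally, if $A=D(\a)$ is principal then for $u\in\r_k$ the critical relation $c_ua_u=\sum_{v\in\r_k,\,v\ne u}a_{uv}a_v$ lies in $\langle\r_k-\{a_u\}\rangle\subseteq\langle\a-\{a_u\}\rangle$; as $c_u$ is least with $c_ua_u\in\langle\a-\{a_u\}\rangle$ it is a fortiori least with $c_ua_u\in\langle\r_k-\{a_u\}\rangle$, and $c_u\ge 2$ then shows $\{a_v:v\in\r_k\}$ minimally generates $\langle\r_k\rangle$, so $A|_{\r_k}=D(\r_k)$ and $D$ is the concatenation of the $D(\r_k)$. To summarize the obstacles: the reduction in the second paragraph is straightforward once one notices that the symmetry of the situation lets Lemma \ref{Elim3} be applied with every index of $\{1,\dots,s\}$ as pivot; the genuinely delicate point is the full‑rank claim for $A_2$ in the third paragraph, because Theorem \ref{sign} and Lemma \ref{Elim3} apply only to honest pseudo principal matrices while $A_2$ merely satisfies $A_2w\le 0$ for some positive $w$, so the rank defect must be excluded directly by the nonnegativity/$Z$‑matrix considerations above together with the nonnegativity of $A_1$; the bookkeeping that the decomposition is independent of the relabellings, and the base case (vacuous for $n<4$), are routine.
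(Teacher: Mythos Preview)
Your proof is correct but organized differently from the paper's. The paper iterates: it finds the smallest size $t$ of a vanishing principal minor, shows via Lemma~\ref{Elim3} that the corresponding index set is saturated, peels it off, and repeats the whole argument on the remaining rows $[B_{i1}\mid B_{i2}]$ (which still annihilate $\a$), stopping when the leading square block $B_{m1}$ is nonsingular; the block-triangular rank count then forces $m=p$. You instead collect all minimal saturated sets at once, which is tidy, and your device of rotating each $k\le s$ into the pivot position before invoking Lemma~\ref{Elim3} is cleaner than the paper's more circuitous reordering. The real divergence is your third paragraph, where you prove $A_2$ nonsingular via an SCC decomposition and an irreducible $Z$-matrix/maximum-ratio argument. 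This is valid but heavier than necessary, and your stated worry that Lemma~\ref{Elim3} does not apply is misplaced: the rows of $A$ indexed by $\r_{m+1}$ are rows of the full pseudo principal matrix and still satisfy $(\text{row})\cdot\a=0$ with the right sign pattern. So if $\det A_2=0$, take the least $t$ with a vanishing $t\times t$ principal minor inside $A_2$, reorder so this block sits in positions $1,\dots,t$ (with $\r_{m+1}$ listed first), and apply your own pivot-rotation trick with Lemma~\ref{Elim3} to the full $A$; every smaller principal minor on $\{1,\dots,t\}\subseteq\r_{m+1}$ is nonzero by minimality of $t$, and you conclude that $\{1,\dots,t\}$ is saturated in $A$ --- a saturated subset of $\r_{m+1}$, contradicting your choice of the $\r_i$. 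That is precisely what the paper's iteration does implicitly (cf.\ Remark~\ref{pseudopos}), and it would replace your $Z$-matrix paragraph with two lines.
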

 \begin{proof}
 Let rank $A = r$.  Since  $r\le n-2$,  there is a principal minor of size $r$ that is zero.  Suppose $t$ is the smallest size of a principal minor that is zero.  By reordering $a_i$ if necessary, we may take  $\Delta_{[t]}=0$ and $\Delta_{[t-1]}\neq 0$. 
 By the lemma \ref{Elim3}, we get $a_{tj} = 0, j\ge t+1$. Now, if $a_{tk} \neq 0, $for some $k<t$, then we get $a_{kj} =0, j\ge t+1$. 
 
By reordering if necessary, we can take $a_{tj}=0, j\le s-1, a_{tj} \neq 0, j\ge s $.  Now, if $a_{ij}\neq 0$, for some $i, s\le i\le t-1, j\le s-1$, then we may relabel $\a$ to make the $i$th row the $t$th row, by the minimality of $t$ and  get $a_{jk}= 0, k\ge t+1$ . So, there exists an $s$, such that $a_{ij} =0, s\le i\le t, j\le s-1, a_{ij} = 0,s\le i\le t, j\ge t+1$.
By looking at the rows $s, \ldots, t$ and $A\a= 0$, we find that the $t-s+1\times t-s+1$ principal minor  $\Delta _{\{s,\ldots ,t\}}= 0$.  Since $t$ is minimal, $s=1$. Let $t= r_1$.  By reversing the order, we conclude that the bottom right $r_1\times r_1$ submatrix of $A$ is a pseudo principal matrix of $\{a_{n-r_1+1}, \ldots, a_n\}$ and a zero matrix of size $r_1\times n-r_1$ as the bottom left.  Then the first $n-r_1+1$ rows of $A$ form a matrix $B$ which again $B$ with $B\a=0$.  We can repeat this with $B$.
Thus, $\a = (a_1, \ldots a_{n-r_1})\sqcup  (a_{n-r_1+1}, \ldots, a_n)$.  Let  $\r_1 = (a_{n-r_1+1}, \ldots, a_n)$.  
$$A= \begin{bmatrix} B_{11} & B_{12}\\
\bf{0}_{r_1\times(n-r_1)}&P(\r_1)\\
\end{bmatrix}$$

Rank of $P(\r_1) = r_1-1$.  So, the $n-r_1 \times n-r_1$ matrix $B_{11}$ has rank $\le n-r_1-2$.  Let $t_1$ be the smallest size of a principal minor in $B_{11}$   which is zero.  Repeating the argument above, and relabeling $\a$, we arrive at 

$$A = \begin{bmatrix}
B_{21}& B_{22}&B_{23}\\
\0&P(\r_2)&\0\\
\0&\0&P(\r_1)\\
\end{bmatrix}$$

Continuing this, we get the result.  In each iteration, we get an $\r_i$ and with the rank $r_i-1$ and we stop when the matrix $B_{i1}$ is invertible. Thus, the rank of $A = n-p$.  If $A=D(\a)$ is principal, then so are $P(\r_i)= D(\r_i)$.
\end{proof}
\begin{remark} \label{pseudopos}\rm{
In fact, we proved the following:
Let $A=(a_{ij})$ be an $r\times n$ matrix with $a_{ii}<0, a_{ij}\ge 0,i\neq j$  with $A\a^T = 0, \a \in (\Z_+)^n$. Suppose $\Delta_{[r]} = 0$ and $\Delta_I\neq 0$, for any $|I|<r$.  Then $A = [T_{r\times r} \ \0_{r\times n-r}]$
}
\end{remark}
When a pseudo principal matrix has rank $n-1$ and there is a vanishing  principal minor of size $n-1$, we have a similar structure. 
\begin{theorem}\label{rkn-1}
Suppose $A=P(\a)$ is an $n\times n$ pseudo principal matrix of rank $n-1$ and that there is a principal minor of size $n-1$ that vanishes.  Then after a possible reordering of $a_i$'s, there is an $s\ge 1$ such that the matrix 

$$A = \begin{bmatrix}
D&\0_{n-s\times s} \\
C_{s\times n-s}&B_{s\times s} \\
\end{bmatrix}$$
where $D$ is of rank $n-s-1$ and is a pseudo principal matrix. If $A$ is a principal matrix, then $D = D(a_{1} \ldots, a_{n-s+1})$
\end{theorem}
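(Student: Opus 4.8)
The plan is to follow the proof of Theorem \ref{structure}, but to exploit that rank $A=n-1$ is only one short of full, so that a single ``peeling'' step already delivers the stated block form. First, let $t$ be the smallest size of a vanishing principal minor of $A$. Each $1\times1$ principal minor equals some $-c_i<0$, and by hypothesis some $(n-1)\times(n-1)$ principal minor vanishes, so $2\le t\le n-1$; set $s:=n-t$, so $1\le s\le n-2$. After reordering $\a$ we may assume $\Delta_{[t]}=0$ while every principal minor of size $<t$ is nonzero, in particular $\Delta_{[t-1]}\neq0$.

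Next I would invoke, essentially verbatim, the reordering argument inside the proof of Theorem \ref{structure}: applying Lemma \ref{Elim3} with $r=t$ gives $a_{tq}=0$ and $a_{ti}a_{iq}=0$ for all $q\ge t+1$ and $i<t$, and the minimality of $t$ then forces, after a permutation of the first $t$ indices, that the block of rows $\{1,\dots,t\}$ has all its entries zero outside the columns $\{1,\dots,t\}$, with the top-left $t\times t$ submatrix pseudo principal. Writing $D$ for that submatrix, so that $D=P(a_1,\dots,a_t)$,
$$A=\begin{bmatrix} D & \0_{t\times(n-t)}\\ C & B\end{bmatrix},$$
where $B$ is the bottom-right $(n-t)\times(n-t)$ block and $C$ the bottom-left one; moreover $D$ contains the nonzero minor $\Delta_{[t-1]}$ and has determinant $\Delta_{[t]}=0$, so $D$ has rank $t-1$.

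Then I would show $B$ is invertible by a kernel count. A pair $(0,v)$ lies in $\ker A$ if and only if $v\in\ker B$, and $\a=\bigl((a_1,\dots,a_t),(a_{t+1},\dots,a_n)\bigr)\in\ker A$ has first component the positive vector $(a_1,\dots,a_t)\neq0$; hence $(\{0\}\times\ker B)+\langle\a\rangle$ is a subspace of $\ker A$ of dimension $\dim\ker B+1$. Since rank $A=n-1$ gives $\dim\ker A=1$, we conclude $\ker B=0$, so $B$ is invertible and $D$ carries the entire rank deficiency. Thus the displayed matrix has exactly the asserted shape, with $s=n-t\ge1$ and, after relabeling the $a_i$, $D=P(a_1,\dots,a_{n-s})$ of rank $(n-s)-1$.

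Finally, for the principal case: if $A=D(\a)$, then for each $i\le n-s$ the $i$th row has zero entries in every column $>n-s$, so $c_ia_i=\sum_{j\le n-s,\ j\neq i}a_{ij}a_j\in\langle(a_1,\dots,a_{n-s})-\{a_i\}\rangle$; since $c_i$ is already the least positive integer with $c_ia_i\in\langle\a-\{a_i\}\rangle\supseteq\langle(a_1,\dots,a_{n-s})-\{a_i\}\rangle$, it is also least for the smaller semigroup, whence $D=D(a_1,\dots,a_{n-s})$. The step that I expect to require genuine care is importing the reordering-and-minimality bookkeeping from the proof of Theorem \ref{structure}; once that single peeling step is in place, the kernel count is short and simultaneously shows that one peeling suffices and that $D$ has rank exactly $n-s-1$.
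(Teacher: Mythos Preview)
Your proof is correct and follows the same strategy as the paper's: both invoke the peeling argument from the proof of Theorem~\ref{structure} to isolate a singular pseudo principal block $D$ in the upper-left corner. The one substantive difference is your choice of $t$. The paper simply takes $t=n-1$ (with $\Delta_{[n-1]}=0$, $\Delta_{[n-2]}\neq0$) and appeals to ``the first part of the proof of Theorem~\ref{structure}''; but the relabeling step inside that proof actually relies on $t$ being the \emph{minimal} size of a vanishing principal minor, so that every $(t-1)$-principal minor is nonzero and Lemma~\ref{Elim3} can be reapplied after each swap. By taking $t$ minimal from the outset you make that borrowed argument go through verbatim, which is cleaner. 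Your kernel-dimension count showing $B$ is invertible is a short extra not present in the paper; it gives an alternate route to $\operatorname{rank} D=n-s-1$ (which you had already obtained from $\Delta_{[t-1]}\neq0$ and $\Delta_{[t]}=0$), and confirms that a single peeling step suffices. The argument for the principal case is the same as the paper's (and the paper's ``$a_{n-s+1}$'' is a typo for ``$a_{n-s}$'', as you correctly write).
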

\begin{proof}
We may take $\Delta_{[n-1]} = 0, \Delta_{[n-2]}\neq 0$.  So, as in the first part of the proof of theorem \ref{structure}, we get that there is an $s$ such that $a_{ij} =0, s\le i\le n-1, j\le s-1, a_{ij} = 0,s\le i\le n-1, j=n$.
After a reordering, $A$ has the desired form.
\end{proof}

\begin{remark} \label{examples}
In the section on examples, we give examples to show that it is possible to have types $2+3$ and  $2+2+1$ for $5\times 5$ principal matrices  with rank $3$ and $2+4, 2+3+1, 3+3,2+2+1+1$ for $6\times 6$ principal matrix of rank $4$. 
\end{remark}
 In the section, \ref{sufficient} we consider the case of a pseudo principal matrix of rank $n-1$   none of whose principal minors of size $n-1$ vanish and give a sufficient condition when it is principal. 
\section{Embedding Dimension  4 and 5}
 In this section, we consider the lower embedding dimensions .  In the embedding dimension $3$, we have a complete classification of the numerical semigroups as complete intersection or the ideal of $2\times 2$ minors of a $2\times 3$ matrix by \cite{He}.  So, we know the principal matrix of $(a_1, a_2, a_3)$ in a suitable order is either   $A =\begin{bmatrix} -c_{1} & c_2 & 0   \\ c_1&- c_{2}&0\\a_{31}&a_{32}&-c_{3} \end{bmatrix} $ or  $A =\begin{bmatrix} -c_{1} & a_{12}& a_{13} \\ a_{21}&-c_{2}&a_{23}\\a_{31}&a_{32}&-c_{3}\end{bmatrix}$.
 
 When the embedding dimension $n=4$, we either have the rank as $2 $ or $3$ and if the embedding dimension is $n=5$, the rank is ether $4$ or $3$.  In either case if the rank is not $n-1$, it is $n-2$.

Numerical semigroups of embedding dimension 4 have been analyzed in terms of their principal matrices.  Villarael \cite{rafael} has characterized the almost complete intersection numerical semigroups of embedding dimension 4 by their principal matrices and Bresinsky \cite{Be} gave the picture of the principal matrix for the Gorenstein non complete intersections numerical semigroup in embedding dimension 4.  In this section,  for the embedding dimension $n=4$ we prove that the principal matrix $D(\a)$ has rank $n-2=2$ if and only if $\a= (a_1, a_2, a_3, a_4) = d(c_1,c_2)\sqcup e(c_3,c_4)$ and $D(\a)$ is the concatenation of two $2\times 2$ matrices  $D(c_1,c_2)$ and $D(c_3,c_4)$.  In particular, except for a finite number of cases, it must be a gluing of two complete intersections and is therefore a complete intersection.  Further, if the rank is $3$, then one of these possibilities must be true for $A= D(\a)$

1. $A^TX=0$ has a solution in positive integers. 

2. $\a = d\{c_1, c_2\} \sqcup  \{b_1, b_2\}$ with $b_i \in <c_1, c_2>$ 

3. $\a = d\{ b_1, b_2,b_3\} \sqcup b_4$ is a gluing of two numerical semigroups and $\a$ is either a complete intersection or an almost complete intersection. 

For the embedding dimension $n=5$ we prove that the principal matrix $D(\a)$ has rank $n-2=3$ if and only if either $\a= (a_1, a_2, a_3, a_4,a_5) = d(c_1,c_2,c_3)\sqcup e(c_4,c_5)$ and $D(\a)$ has $3+2$ block structure or $\a= (a_1, a_2, a_3, a_4,a_5) = a_{1} \sqcup d(c_2,c_3)\sqcup e(c_4,c_5)$ and $D(\a)$ has $1+2+2$ block structure. Using theorem \ref{rkn-1}, one can also analyze the case of maximal rank 4 as we did in embedding dimension 5.

\begin{theorem} Suppose $\a=\{a_1,a_2,a_3,a_4\}$ are relatively prime positive integers minimally generating a semigroup $<\a>$. Suppose that the principal matrix $A$ of $\a$ has rank $2$.  Then after a permutation of the entries if necessary, 
$\a = d\{c_1,c_2\}\sqcup e\{c_3,c_4\}$ with $(d,e)=(c_1,c_2)=(c_3,c_4) =1$ and the principal matrix has the following block structure
$$A =\begin{bmatrix} -c_{1} & c_2 & 0 &0 \\ c_1&-c_{2}&0&0\\0&0&-c_{3}&c_4\\0&0&c_3&-c_{4}\end{bmatrix}$$
 \end{theorem}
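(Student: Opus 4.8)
The plan is to read this statement off from Theorem~\ref{structure}, after which only the (essentially trivial) shape of a $2\times 2$ principal matrix is needed.

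Since $A=D(\a)$ is principal it is in particular a pseudo principal matrix, and by hypothesis its rank equals $2$, which is $n-2$ for $n=4$. So Theorem~\ref{structure} applies with $p=n-\mathrm{rank}\,A=2$: there are integers $r_1,r_2\ge 2$ with $r_1+r_2\le n=4$, which forces $r_1=r_2=2$ and hence $|\r_{p+1}|=n-(r_1+r_2)=0$. Thus, after a reordering, $\a=\r_1\sqcup\r_2$ with $|\r_1|=|\r_2|=2$; and since $A$ is principal the lower block $A_1$ of Theorem~\ref{structure} has no rows, so
$$A=\begin{bmatrix} D(\r_1) & \0 \\ \0 & D(\r_2)\end{bmatrix},$$
where each $D(\r_i)$ is a $2\times 2$ principal matrix.

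Next I would pin down the two blocks. Write $\r_1=\{a_1,a_2\}$ (two distinct members of a minimal generating set), put $d=(a_1,a_2)$, and set $c_2=a_1/d$, $c_1=a_2/d$, so that $(c_1,c_2)=1$. Since $c_1,c_2$ are coprime, the smallest $c$ with $c\,a_1\in\langle a_2\rangle$ equals $c_1$, with off-diagonal entry $c_2$, and symmetrically for the second row, giving $D(\r_1)=\begin{bmatrix}-c_1 & c_2\\ c_1 & -c_2\end{bmatrix}$ and $\r_1=d\{c_1,c_2\}$. The diagonal entries $-c_1,-c_2$ of this block are diagonal entries of $D(\a)$, hence $c_1,c_2\ge 2$ because $\a$ minimally generates $\sa$ (equivalently, if $c_1=1$ then $a_1\in\langle a_2\rangle\subseteq\langle\a-\{a_1\}\rangle$, contradicting minimality). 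The same computation applied to $\r_2=\{a_3,a_4\}$ yields $\r_2=e\{c_3,c_4\}$ with $e=(a_3,a_4)$, $(c_3,c_4)=1$, $c_3,c_4\ge 2$, and $D(\r_2)=\begin{bmatrix}-c_3 & c_4\\ c_3 & -c_4\end{bmatrix}$.

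Finally $(d,e)=1$, since $1=(a_1,a_2,a_3,a_4)=\big((a_1,a_2),(a_3,a_4)\big)=(d,e)$. Substituting the two blocks into the displayed form of $A$ gives exactly the asserted matrix. I do not expect a genuine obstacle: all the substance sits in Theorem~\ref{structure}, and what remains is only the embedding dimension $2$ bookkeeping together with the observation that minimality of the generating set forces each $c_i\ge 2$; the single point needing a little care is matching the ordering produced by Theorem~\ref{structure} (and possibly a swap inside each pair $\r_i$) with the precise placement of the $c_i$ in the stated matrix.
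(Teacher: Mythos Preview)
Your argument is correct and reaches the conclusion cleanly. There is one phrasing slip: the lower block $A_1$ of Theorem~\ref{structure} has no rows because $r_1+r_2=4=n$, not because $A$ is principal. (Principality is what upgrades the diagonal blocks from $P(\r_i)$ to $D(\r_i)$, as you use a moment later.) This is cosmetic, not mathematical.

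The route, however, differs from the paper's. The paper argues directly in the $4\times 4$ setting: by Corollary~\ref{minors} some $2\times 2$ principal minor vanishes, say $\Delta_{12}=0$; Lemma~\ref{Elim3} then forces $a_{1j}=a_{2j}=0$ for $j=3,4$ and gives $\{a_1,a_2\}=d\{c_1,c_2\}$; finally, since rank is $2$ the last two rows are scalar multiples of one another, and the sign pattern ($-c_3<0$, $a_{34}\ge 0$, $-c_4<0$) forces the scalar to be negative, which kills $a_{31},a_{32},a_{41},a_{42}$ and yields the second $2\times 2$ block. Your approach instead invokes the general structure Theorem~\ref{structure} with $p=2$, which forces $r_1=r_2=2$ and the block-diagonal shape immediately; the remaining work is the trivial identification of a $2\times 2$ principal matrix. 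What you gain is brevity and a clean demonstration that the $n=4$ case really is a corollary of Theorem~\ref{structure}; what the paper's hands-on proof buys is independence from that theorem and an explicit illustration of the sign mechanism (essentially the proof template that Theorem~\ref{structure} later abstracts).
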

 \begin{proof}
 Let $A=\begin{bmatrix} -c_{1} & a_{12} & a_{13} &a_{14} \\ a_{21}&-c_{2}&a_{23}&a_{24}\\a_{31}&a_{32}&-c_{3}&a_{34}\\a_{41}&a_{42}&a_{43}&-c_{4}\end{bmatrix}$. 
Since the rank of $A$ is $2$,  by corollary \ref{minors}, one of the principal minors of size $2$ must vanish.  By reordering the $a_i$, we may choose $\Delta_{12} = 0$.
By lemma \ref {Elim3}, and the fact that it is a principal matrix, we get $\{a_1, a_2\} = d\{c_1, c_2\}$ where $(c_1, c_2) = 1$. 
$$A =\begin{bmatrix} -c_{1} & c_2 & 0 &0 \\ c_1&-c_{2}&0&0\\*&*&-c_{3}&*\\*&*&*&-c_{4}\end{bmatrix}$$

Now, since the rank of $A$ is $2$, we get that the last two rows must be multiples of each other.  Since $-c_3<0, a_{34} >0, -c_4<0$, the fourth row must be a negative multiple of the third row.  But then $a_{4i} = -xa_{3i},i=1,2$.  So, $a_{31}= a_{32} = a_{41} = a_{42} = 0$.   That implies, $c_3a_3 = a_{34}a_4$.  So, $a_3 = c_4 e, a_4= c_3e$, where $e=(a_3, a_4)$.
Thus, $\{a_1, a_2, a_3, a_4\} = d\{c_2,,c_1\}\sqcup e\{c_4, c_3\}$
\end{proof}
 
 \begin{corollary}
 If $A$ is a $4\times 4$ pseudo principal matrix of rank $2$ , then after some rearrangement of rows and columns, $A$ looks like 
 $$A =\begin{bmatrix} -c_{1} & c_2 & 0 &0 \\ ac_1&-ac_{2}&0&0\\0&0&-c_{3}&c_4\\0&0&bc_3&-bc_{4}\end{bmatrix}$$
 \end{corollary}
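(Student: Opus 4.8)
The plan is to deduce this from the structure theorem rather than repeat the case analysis of the preceding theorem. Since $A=P(\a)$ has rank $2=n-2$ with $n=4$, Theorem \ref{structure} applies with $p=2$: there exist integers $r_1,r_2\ge 2$ with $r_1+r_2\le 4$, so necessarily $r_1=r_2=2$, and after reordering the $a_i$ (which permutes rows and columns simultaneously) $\a=\r_1\sqcup\r_2$ with no trailing ``$+1$'' blocks, since $r_1+r_2=4=n$ already exhausts all the indices. Hence the lower-left block $A_1$ in the conclusion of Theorem \ref{structure} has no rows, and $A$ is block diagonal with the two $2\times 2$ pseudo principal blocks $P(\r_1)$ and $P(\r_2)$ on the diagonal and zeros off it.

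It then remains to pin down the shape of a $2\times 2$ pseudo principal matrix $\begin{bmatrix}-c_1 & a_{12}\\ a_{21} & -c_2\end{bmatrix}$. Because it annihilates a nonzero positive vector its determinant vanishes, so $c_1c_2=a_{12}a_{21}$; as $c_1,c_2>0$ this forces $a_{12},a_{21}>0$ and the two rows proportional with a negative ratio. Writing that ratio as $-a$ with $a>0$ and renaming $a_{12}$ as $c_2$ (and likewise $a_{34}$ as $c_4$ in the second block) turns each block into $\begin{bmatrix}-c_1 & c_2\\ ac_1 & -ac_2\end{bmatrix}$; reassembling the two blocks produces exactly the asserted matrix, with $a,b$ positive rationals.

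I expect the only point needing a word of care is the claim that Theorem \ref{structure} here yields a genuinely block diagonal matrix, i.e.\ that the lower-left block is absent; this is precisely the observation $r_1+r_2=n$. If one instead wanted a self-contained argument paralleling the preceding theorem, the single new ingredient beyond the principal case would be showing that the lower-left $2\times 2$ block $C$ vanishes: after using Corollary \ref{minors} to arrange $\Delta_{12}=0$, Lemma \ref{Elim3} to kill $a_{13},a_{14},a_{23},a_{24}$, and rank $2$ together with Theorem \ref{sign} to force $\Delta_{34}=0$, one writes the dependency $r_4+b\,r_3=p\,r_1$ forced by the rank (with $b>0$ the proportionality constant of the bottom-right block) and reads off the first two columns: nonnegativity of the resulting left-hand sides together with $c_1,c_2,b>0$ squeezes $p=0$ and then $a_{31}=a_{41}=a_{32}=a_{42}=0$. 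That nonnegativity squeeze, which here replaces the minimality argument available in the principal case, is the only real content.
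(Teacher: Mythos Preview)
Your argument is correct. Invoking Theorem \ref{structure} with $n=4$, $p=2$ forces $r_1=r_2=2$ and $\sum r_i=n$, so the residual block $(A_1\;A_2)$ is empty and $A$ is genuinely block diagonal; your analysis of each $2\times 2$ pseudo principal block is also fine (note $a_{12}>0$ automatically, since $-c_1a_1+a_{12}a_2=0$).

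The paper takes a different, more local route: it simply points back to the proof of the immediately preceding theorem for the principal case and observes that principality was never used to obtain the block-diagonal structure, only to pin down the decomposition of $\a$. Your main argument instead appeals to the general structure theorem (Theorem \ref{structure}), packaging the same ingredients (Corollary \ref{minors}, Lemma \ref{Elim3}, the sign theorem) at a higher level. This is arguably cleaner, since it makes the corollary an immediate specialization rather than a rereading of a case analysis. Your alternative ``self-contained'' sketch in the last paragraph, using the dependency $r_4+b\,r_3=p\,r_1$ and the nonnegativity squeeze to kill the lower-left block, is essentially the paper's argument spelled out in the pseudo case; the squeeze you identify is exactly the content that replaces minimality. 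So both routes are valid, and you have in fact covered both.
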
 
 \begin{proof}
 This is precisely what we proved in the above theorem. For we did not use the fact $A$ is a principal matrix for the concatenation argument. ( We only uses it to get the decomposition of $\a $ as a disjoint union. )
 \end{proof}
 
 \begin{definition}  We say $\a=\{a_1, a_2 \ldots, a_n\}$ is ordinarily decomposable if  $\a=d\b\sqcup e\c$ where $(d,e) = 1$  and $(\b)=(\c)=1$ and the principal matrix $D(\a)$ 
is the concatenation of the principal matrices $D(\b)$ and $D(\c)$.
 \end{definition}
 
 An ordinary decomposition is gluing if $d \in <\c> -\c$ and 
 $e\in <\b>-\b$
 
 What we just proved was if the principal matrix of $\a=(a_1,a_2,a_3,a_4)$ has rank $2$ then it is decomposable as $2$ and $2$. 
 As a corollary to theorem \ref{structure}, we have 
 \begin{corollary} \label{proprankpm}
If $\a$ has no ordinary decomposition, then $A$ has rank $n-1$. 
\end{corollary}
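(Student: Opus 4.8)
\emph{Proof proposal.} The plan is to argue by contraposition: assuming that every principal matrix of $\a$ has rank at most $n-2$, I want to produce an ordinary decomposition of $\a$. Fix such a principal matrix $A=D(\a)$ and set $p:=n-\operatorname{rank} A$, so $p\ge 2$; note $\operatorname{rank} A\ge n/2\ge 2$ by Corollary \ref{rank}, hence also $p\le n/2$. Theorem \ref{structure} now gives, after a reordering, a disjoint decomposition $\a=\r_1\sqcup\cdots\sqcup\r_p\sqcup\r_{p+1}$ with $|\r_i|=r_i\ge 2$ for $1\le i\le p$, together with the block form
$$A=\begin{bmatrix} D & \0\\ A_1 & A_2\end{bmatrix},\qquad D=D(\r_1)\oplus\cdots\oplus D(\r_p),$$
where $D$ occupies the rows and columns indexed by $\r_1\sqcup\cdots\sqcup\r_p$. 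For $n=4$ one automatically has $p=2$, $r_1=r_2=2$ and $\r_{p+1}=\emptyset$, so the corollary is already contained in the rank-$2$ theorem proved just above; the content is for larger $n$.

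Consider first the case $\r_{p+1}=\emptyset$. Then $A$ is literally the block-diagonal matrix $D(\r_1)\oplus\cdots\oplus D(\r_p)$ with $p\ge 2$ blocks, and I would regroup it into two pieces. Put $d:=\gcd(\r_1)$, $e:=\gcd(\r_2\sqcup\cdots\sqcup\r_p)$, $\b:=\r_1/d$ and $\c:=(\r_2\sqcup\cdots\sqcup\r_p)/e$, so $\a=d\b\sqcup e\c$, $(d,e)=\gcd(\a)=1$ and $(\b)=(\c)=1$. It remains to see that $D(\b)\oplus D(\c)$, with $D(\c):=D(\r_2)\oplus\cdots\oplus D(\r_p)$, is a principal matrix of $\a$; equivalently, that $D(\c)$ is a principal matrix of $\c$. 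It has the correct sign pattern, relatively prime rows, and annihilates $\c$, so the only point is that its diagonal records the least exponents of $\c$. This comes from a squeeze: for a generator $a_j\in\r_i$ one has $c_j^{(\r_i)}\ge c_j^{(\c)}\ge c_j^{(\a)}$, since enlarging the generating set cannot increase the least $c$ with $ca_j\in\langle\,\cdot\,-\{a_j\}\rangle$, while $c_j^{(\r_i)}=c_j^{(\a)}$ because the $\r_i$-block of $A=D(\a)$ equals $D(\r_i)$ and both record their diagonal entry in row $j$ as this least exponent. Hence $c_j^{(\c)}=c_j^{(\a)}\ge 2$, so $\c$ minimally generates $\langle\c\rangle$ and $D(\c)$ is its principal matrix. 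Since principal matrices are unchanged by scaling, $A=D(\b)\oplus D(\c)$ is the concatenation of $D(\b)$ and $D(\c)$, and $\a$ is ordinarily decomposable.

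The case $\r_{p+1}\ne\emptyset$ is the crux. Here the rows indexed by $\r_{p+1}$, namely $[A_1\ A_2]$, may have nonzero entries in several of the blocks $D(\r_i)$, so $A$ itself need not be a concatenation --- indeed such an $\a$ can genuinely fail to be ordinarily decomposable while still admitting a low-rank principal matrix (for instance $\a=\{22,33,26,39,34\}$, whose block-structured rank-$3$ principal matrix above is not a concatenation). The point to be established is that, under the standing hypothesis, a nonempty tail cannot occur: re-choosing the critical relation of a block generator that appears in the relation of an $\r_{p+1}$-generator ``spreads'' the gluing and produces a principal matrix of $\a$ of rank exactly $n-1$, contradicting that every principal matrix of $\a$ has rank $\le n-2$. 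Granting this, $\r_{p+1}$ is forced to be empty and we are back in the previous paragraph, so $\a$ is ordinarily decomposable in every case. I expect this last step --- extracting, from a nonempty tail $\r_{p+1}$, a principal matrix of maximal rank $n-1$ --- to be the main obstacle; the rest is the bookkeeping indicated above, and for $n=4$ the obstacle is absent.
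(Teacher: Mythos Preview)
The paper's proof is a single sentence: it simply invokes Theorem~\ref{structure} and notes that $p\ge 2$. The authors evidently read the block form
\[
A=\begin{bmatrix} D & \0\\ A_1 & A_2\end{bmatrix},\qquad D=D(\r_1)\oplus\cdots\oplus D(\r_p),
\]
with at least two diagonal blocks as already furnishing an ordinary decomposition, and they do not isolate or discuss the tail $\r_{p+1}$ at all.

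Your write-up is far more careful than this, but it also departs from what is being claimed. You replace the contrapositive hypothesis ``$A$ has rank $\le n-2$'' by the stronger ``\emph{every} principal matrix of $\a$ has rank $\le n-2$'', so you are attempting to prove a weaker corollary than the one stated. Your treatment of the case $\r_{p+1}=\emptyset$ is correct and more explicit than anything the paper provides (the squeeze $c_j^{(\r_i)}\ge c_j^{(\c)}\ge c_j^{(\a)}=c_j^{(\r_i)}$ is exactly the right check that the regrouped block is again principal). But for the case $\r_{p+1}\ne\emptyset$ you offer only a sketch---manufacture a rank-$(n-1)$ principal matrix from a nonempty tail---and you explicitly flag it as the main obstacle and do not carry it out. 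So the proposal is incomplete by your own account.

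Your instinct about the obstacle is sound. With a nonempty tail the given $A$ is \emph{not} a two-block concatenation, and your running example $\a=\{22,33,26,39,34\}$ makes the point sharply: it has a rank-$3$ principal matrix, yet one checks that the unique critical relation $4\cdot 34=5\cdot 22+26$ (up to the substitution $3\cdot 22=2\cdot 33$) forces both $22$ and $26$ into whichever piece contains $34$, and then no split $\a=d\b\sqcup e\c$ with $|\b|,|\c|\ge 2$ has $D(\b)\oplus D(\c)$ matching the diagonal of $D(\a)$. So the subtlety you raise is genuine, and the paper's one-line appeal to ``$p\ge 2$'' does not resolve it either; at best the paper is implicitly restricting to the situation $\r_{p+1}=\emptyset$, which is the only case in which Theorem~\ref{structure} literally hands you a concatenation.
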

\proof It follows from the Theorem \ref{structure} as $p \geq 2$.
\vskip .1truein
The converse of Corollary \ref{proprankpm} is not true. For an example consider the sequence $\a=\{6,8,10,13\}$. It can be thought of as gluing of $2\{3,4,5\}\sqcup 13\{1\}$. But the rank of $D(\a)=\begin{bmatrix} -3 & 1 & 1 & 0\\1 & -2 & 1& 0\\ 2 & 1 & -2 & 0\\3& 1& 0& -2 \end{bmatrix}$ is $3$. However, when $n=4$,  rank of $D(\a) < 3$ implies it is a decomposition.

 \begin{theorem}\label{bounds}
 Let $C = d\{a_1,a_2\}\sqcup e\{b_1,b_2\}$ be a decomposition with $(d,e) =1$ with the principal matrix of $C$ a concatenation of $D(a_1,a_2)$ and $D(b_1,b_2)$.   Then $d > b_k$, and $e> a_k$ for  $k=1,2$
 \end{theorem}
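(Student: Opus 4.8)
The plan is to extract the inequalities from a single binomial in the defining ideal of $C=\{da_1,da_2,eb_1,eb_2\}$. Write $R=k[x_1,x_2,x_3,x_4]$ and let $I_C\subset R$ be the toric ideal with $R/I_C\cong k[C]$, which is prime of height $3$, and let $L\subset\Z^4$ be its rank-$3$ relation lattice, i.e.\ the kernel of $v\mapsto v_1da_1+v_2da_2+v_3eb_1+v_4eb_2$. The hypothesis that the concatenation of $D(a_1,a_2)$ and $D(b_1,b_2)$ is a principal matrix of $C$ says precisely that $C$ minimally generates its semigroup and that its (uniquely determined) critical numbers are $c_1=a_2,\ c_2=a_1,\ c_3=b_2,\ c_4=b_1$; equivalently, $a_2$ is the least $c\ge1$ with $c\cdot da_1$ in the semigroup generated by $\{da_2,eb_1,eb_2\}$, and similarly for the other three generators. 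Put $u=(a_2,-a_1,0,0)$ and $w=(0,0,b_2,-b_1)$, the primitive vectors of $L$ corresponding to the critical binomials $x_1^{a_2}-x_2^{a_1}$ and $x_3^{b_2}-x_4^{b_1}$. Since $u$ and $w$ are supported on complementary pairs of coordinates, $\langle u,w\rangle$ is saturated in $\Z^4$, hence in $L$, and $L/\langle u,w\rangle$ is free of rank one.

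First I would pick $v\in L$ whose class generates $L/\langle u,w\rangle$ and, after adjusting $v$ by multiples of $u$ and $w$, arrange that its first coordinate lies in $[0,a_2)$ and its third in $[0,b_2)$; let $g\in I_C$ be the corresponding binomial. The next step is to use minimality of the $c_i$ to pin down the shape of $g$: were one monomial of $g$ a pure power $x_1^{\,p}$ with $p\ge1$ and $x_1$ absent from the other monomial, then $p\cdot da_1$ would lie in the semigroup generated by $\{da_2,eb_1,eb_2\}$, forcing $a_2=c_1\le p$, which is impossible since $p<a_2$; the analogous statement for each variable (with a short rewriting against $u$ and $w$ to control the second and fourth coordinates) excludes every ``one variable against the other three'' shape and bounds each exponent of $g$ below the corresponding $c_i$. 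What survives is that, after possibly swapping $b_1$ and $b_2$, $g$ is either a \emph{block relation} $x_1^{p_1}x_2^{p_2}=x_3^{q_1}x_4^{q_2}$ or a \emph{corner relation} $x_1^{p_1}x_3^{q_1}=x_2^{p_2}x_4^{q_2}$, with all of $p_1,p_2,q_1,q_2\ge1$ and each strictly less than $a_2,a_1,b_2,b_1$ respectively.

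In the block case the result is immediate. From $p_1da_1+p_2da_2=q_1eb_1+q_2eb_2$ and $(d,e)=1$ we get $d\mid q_1b_1+q_2b_2$ and $e\mid p_1a_1+p_2a_2$; equating the common value of the two sides shows the two cofactors coincide, say with value $\lambda$, and a short determinant computation identifies $\lambda$ with the gcd of the three maximal minors of the matrix with rows $u,w,v$, hence with the index $[L:\langle u,w,v\rangle]$, which is $1$ because $v$ generates $L/\langle u,w\rangle$. Therefore $e=p_1a_1+p_2a_2\ge a_1+a_2>\max(a_1,a_2)$, and symmetrically $d=q_1b_1+q_2b_2\ge b_1+b_2>\max(b_1,b_2)$, which is the assertion. (This block case is exactly the situation in which $C$ is a gluing, with $e\in\langle a_1,a_2\rangle$ and $d\in\langle b_1,b_2\rangle$.)

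The main obstacle is the corner case $x_1^{p_1}x_3^{q_1}=x_2^{p_2}x_4^{q_2}$: reading the relation modulo $d$ and modulo $e$ and using primitivity only gives $e=\pm(p_1a_1-p_2a_2)$ and $d=\pm(q_2b_2-q_1b_1)$, quantities that a priori lie in neither $\langle a_1,a_2\rangle$ nor $\langle b_1,b_2\rangle$. Here the plan is to show no such $g$ can coexist with the hypotheses: feeding the corner relation repeatedly through $x_1^{a_2}\equiv x_2^{a_1}$ and $x_3^{b_2}\equiv x_4^{b_1}$ and cancelling common factors produces a new binomial of $I_C$ that is either of a forbidden ``one-against-three'' shape or a block relation with an exponent strictly below the relevant $c_i$ — contradicting the previous step — unless this Euclidean-type reduction stalls, which happens exactly when $d\in\langle b_1,b_2\rangle$ or $e\in\langle a_1,a_2\rangle$. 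In that event one runs the block-case computation on the side in question to get the strict inequality for that pair of generators, and a parallel bookkeeping using $c_3,c_4$ in place of $c_1,c_2$ (together with the elementary fact that each member of a gluing exceeds the smaller generators it is built from) delivers the inequality for the other pair. Carrying out this reduction and verifying that it always terminates in a contradiction — in particular handling the small values of the $p_i,q_i$ where the reduction bottoms out — is the delicate point of the proof.
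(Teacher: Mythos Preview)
Your block case is fine, but the corner case is a genuine gap, not just a detail: you yourself flag the Euclidean-type reduction as ``the delicate point'' without carrying it out, and the assertion that it ``stalls exactly when $d\in\langle b_1,b_2\rangle$ or $e\in\langle a_1,a_2\rangle$'' is unproved. Worse, in the corner case the index-one computation only gives $e=\pm(p_1a_1-p_2a_2)$ and $d=\pm(q_2b_2-q_1b_1)$, quantities that can easily be smaller than $a_k$ or $b_k$ (with $a_1=4,a_2=7$ one already gets $e=1$ from $p_1=2,p_2=1$). So the principal-matrix hypothesis must do real work here, and your sketch does not say how; the endgame (``parallel bookkeeping\ldots together with the elementary fact that each member of a gluing exceeds the smaller generators'') is not an argument.

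The paper sidesteps all of this by dropping to embedding dimension three. Fix one generator from the first block and look at the triad $\{da_1,eb_1,eb_2\}$ (after dividing out $(e,a_1)$). Because the concatenated matrix is principal for $C$, the critical number of $eb_1$ in $C$ is $b_2$, realised by the pure relation $b_2\cdot eb_1=b_1\cdot eb_2$; since the triad has fewer generators, this same relation is critical for $eb_1$ there as well. Herzog's dichotomy for three-generated numerical semigroups then says that a critical relation omitting one generator forces the triad to be a complete intersection, and hence $da_1\in\langle b_1,b_2\rangle$. Writing $da_1=pb_1+qb_2$ gives the relation $e\cdot(da_1)=p(eb_1)+q(eb_2)$ in $C$, so $e\ge c_1=a_2$, and the equality case is ruled out by minimal generation of $C$. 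No lattice bookkeeping, no block/corner split.
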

 \begin{proof}
 Consider $da_1, eb_1, eb_2$.  In what follows, we may assume $(e,a_1) =1$ for otherwise, we can simply divide by the common factor for this triad.  If $da_1$ is not in the semigroup generated by $b_1,b_2$, then this triad is not a complete intersection. This means, the $b_2(eb_1) -b_1(eb_2)$ is not a principal relation.  A contradiction to our assumption on the principal matrix of $C$.  So, $da_1 \in <b_1,b_2>$.  Thus, $da_1 =pb_1+qb_2$ so that  $x_1^{e} -x_3^px_4^q$ is a relation.  But then, $e\ge a_2$.   If $e= a_2$, then since $x_3b_2-x_4b_1$ is a minimal relation, we must have $d\in \langle b_1,b_2 \rangle$.  But then $C$ is not minimal. So, $e>a_2$. Similarly, $e> a_1$ and $d>b_j$ for all $j$. 
 \end{proof}
 \begin{theorem} Suppose $\a=\{a_1,a_2,a_3,a_4,a_5\}$ are relatively prime positive integers minimally generating a semigroup $<\a>$. Suppose that the principal matrix $A$ of $\a$ has rank $3$.  Then after a permutation of the entries if necessary, 
$\a = d\{c_1,c_2, c_3\}\sqcup e\{c_4,c_5\}$ with $(d,e)=(c_1,c_2,c_3)=(c_4,c_5) =1$ and the principal matrix has the following block structures
$$A =\begin{bmatrix} -c_{1} & a_{12} & a_{13} & 0 &0 \\ a_{21} &-c_{2}& a_{23} &0 &0\\a_{31}&a_{32}&-c_{3} &0&0\\ 0 & 0 & 0 &c_{4}&a_{45}\\ 0& 0& 0 &a_{54}&-c_{5}\end{bmatrix}$$ or 
$\a= a_1 \sqcup d\{c_2,c_3\} \sqcup e\{c_4, c_5\}$ with $(c_2, c_3) =(c_4,c_5) = (a_1,d, e) = 1$ and 
$$ A=\begin{bmatrix}-c_{1}&a_{12}&a_{13} &a_{14} &a_{15} \\0&-c_{2}&c_3 &0 &0 \\0&c_2&-c_{3} & 0& 0\\ 0 &0 &0 &-c_{4}&a_{45} \\ 0 & 0&0 & a_{54}&-c_{5} \end{bmatrix}$$
 \end{theorem}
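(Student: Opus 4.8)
The plan is to obtain this as the $n=5$ specialization of Theorem~\ref{structure} together with a short bookkeeping step. First I would observe that, since $\a$ minimally generates $\sa$, the matrix $A=D(\a)$ is a genuine principal matrix and in particular a pseudo principal matrix $P(\a)$ of size $n=5$; as $\operatorname{rank} A = 3 = n-2$, Theorem~\ref{structure} applies and gives $\operatorname{rank} A = n-p$, forcing $p=2$. Hence there are integers $r_1,r_2\ge 2$ with $r_1+r_2\le 5$, and after a permutation of the entries we may write $\a=\r_1\sqcup\r_2\sqcup\r_3$ with $|\r_1|=r_1$, $|\r_2|=r_2$, and
$$A=\begin{bmatrix} D & \0\\ A_1 & A_2\end{bmatrix},$$
where $D$ is the block-diagonal concatenation of $D(\r_1)$ and $D(\r_2)$ — these being honest principal matrices because $A$ is — and $A_2$ is the remaining invertible block attached to $\r_3$.

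The second step is to enumerate. From $r_1,r_2\ge 2$ and $r_1+r_2\le 5$, the only possibilities, up to interchanging the two blocks, are $(r_1,r_2)=(3,2)$ or $(r_1,r_2)=(2,2)$. In the case $(3,2)$ we have $r_1+r_2=5=n$, so $\r_3=\emptyset$ and $A=D=D(\r_1)\oplus D(\r_2)$; relabeling so that the size-$3$ block comes first and letting $d$ and $e$ be the gcd's of the members of $\r_1$ and $\r_2$, we write $\r_1=d\{c_1,c_2,c_3\}$ and $\r_2=e\{c_4,c_5\}$, so that $(c_1,c_2,c_3)=(c_4,c_5)=1$ by construction and $(d,e)=1$ since $\a$ is relatively prime. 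Because $D(dc_1,dc_2,dc_3)=D(c_1,c_2,c_3)$ (the critical relations coincide), the top-left block equals the principal matrix $D(c_1,c_2,c_3)$ of the $3$-generator numerical semigroup $\langle c_1,c_2,c_3\rangle$ (which automatically has rank $2$), and the bottom-right block equals the $2\times 2$ principal matrix $D(c_4,c_5)=\left[\begin{smallmatrix}-c_4 & c_5\\ c_4 & -c_5\end{smallmatrix}\right]$ up to relabeling; this is the first displayed form. In the case $(2,2)$ we have $r_1+r_2=4$, so $\r_3$ is a single generator; relabel it $a_1$ and move it to the front, so that $A_2=[-c_1]$ is $1\times 1$, $A_1=[a_{12}\ a_{13}\ a_{14}\ a_{15}]$, and the lower-right $4\times 4$ block is $D(\r_1)\oplus D(\r_2)$ with $\r_1=d\{c_2,c_3\}$, $\r_2=e\{c_4,c_5\}$, $(c_2,c_3)=(c_4,c_5)=1$, and $(a_1,d,e)=1$ from relative primality of $\a$; each $2\times 2$ block again has the form $\left[\begin{smallmatrix}-c & c'\\ c & -c'\end{smallmatrix}\right]$. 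This is the second displayed form.

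For completeness I would also note the converse: in either displayed form $\operatorname{rank} A$ is the sum of the ranks of the diagonal blocks, namely $2+1=3$ and $1+1+1=3$ respectively, and both block patterns genuinely occur (see Remark~\ref{examples}). I do not expect a real obstacle here, since all the substantive work is already contained in Theorem~\ref{structure}; the only points requiring care are (i) that $p=2$ and that $(r_1,r_2)\in\{(2,2),(3,2)\}$ up to symmetry are the only options; (ii) tracking gcd's so that relative primality of $\a$ gives $(d,e)=1$ in the first case and $(a_1,d,e)=1$ in the second; and (iii) recognizing a $2\times 2$ principal block as $\left[\begin{smallmatrix}-c & c'\\ c & -c'\end{smallmatrix}\right]$ and a $1\times 1$ block as $[-c_1]$, together with the row-and-column permutation needed to move the singleton (resp.\ the size-$3$ block) into the position shown in the statement.
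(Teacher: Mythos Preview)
Your argument is correct and is actually the cleaner route. The paper does \emph{not} simply specialize Theorem~\ref{structure}; instead it reproves the $n=5$ case by hand: it invokes Corollary~\ref{minors} to find a vanishing $3\times 3$ principal minor, uses Lemma~\ref{Elim3} to force $a_{3j}=0$ for $j\ge 4$, and then splits into the subcases $a_{31}a_{32}\neq 0$ (yielding the $3+2$ block form after showing $\Delta_{45}=0$) and $a_{31}a_{32}=0$ (yielding the $1+2+2$ form). Your approach short-circuits this by recognizing that $\operatorname{rank}A=n-2$ puts you exactly in the hypothesis of Theorem~\ref{structure} with $p=2$, and that the constraint $r_1,r_2\ge 2$, $r_1+r_2\le 5$ leaves only the types $3+2$ and $2+2+1$; the gcd bookkeeping and the identification of the $2\times 2$ blocks then finish the job. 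What you gain is economy and a transparent reason for the dichotomy; what the paper's direct argument buys is independence from the somewhat involved induction inside Theorem~\ref{structure}, and it makes visible exactly which entries vanish and why. The only cosmetic point to watch is the row/column permutation moving the singleton block to the top in the $2+2+1$ case (Theorem~\ref{structure} places it at the bottom), which you already flag; conjugating by the appropriate permutation matrix handles this without difficulty.
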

\begin{proof}
Let principal matrix $A=\begin{bmatrix} -c_{1} & a_{12} & a_{13} &a_{14} &a_{15}\\ a_{21}&-c_{2}&a_{23}&a_{24}&a_{25}\\a_{31}&a_{32}&-c_{3}&a_{34}&a_{35}\\a_{41}&a_{42}&a_{43}&-c_{4} &a_{45} \\ a_{51}&a_{52}&a_{53}&a_{54}&-c_{5} \end{bmatrix}$. As the rank of $A=3$, so there is a principal minor of size $3$ that is zero by Corollary \ref{minors}.  If $\Delta _{12}= 0$, then by lemma \ref{Elim3}, we get $a_{1i} = a_{2i} = 0, i=3,4,5$.  Since the rank of A is 3, we must have $\Delta_{3,4,5} = 0$ and $\Delta _{34} \neq 0$.  In either case, we get a $\Delta_{ijk} = 0, \Delta_{ij} \neq 0$.

Now, without loss of generality, let us reorder and assume that $\Delta_{12} \neq 0$ and $\Delta_{123}=0$. 

So, by lemma \ref{Elim3}, we get $a_{3j} = 0, j=4,5$.  Further if $a_{31}a_{32} \neq 0$, then we get $$ A=\begin{bmatrix}-c_{1}&a_{12}&a_{13} &0 &0 \\a_{21}&-c_{2}&a_{23} &0 &0 \\a_{31}&a_{32}&-c_{3} & 0& 0\\ \ast &\ast &\ast &-c_{4}&a_{45} \\ \ast & \ast &\ast & a_{54}&-c_{5} \end{bmatrix}$$

So, the fifth row is a linear combination of the first four rows. This means $a_{54} = -xc_4, -c_5 = xa_{45}$.  So, $\Delta_{45} = 0$.  reordering and applying lemma \ref{Elim3}, we get $a_{4i} = a_{5i}= 0, i= 1,2,3$.
Thus, if $a_{31}a_{32} \neq 0$, $$ A=\begin{bmatrix}-c_{1}&a_{12}&a_{13} &0 &0 \\a_{21}&-c_{2}&a_{23} &0 &0 \\a_{31}&a_{32}&-c_{3} & 0& 0\\ 0 &0 &0 &-c_{4}&a_{45} \\ 0 & 0&0 & a_{54}&-c_{5} \end{bmatrix}$$

Now, if one of $a_{31} $ or $a_{32} = 0$, say $a_{31}= 0$,  then we get $\Delta _{23} = 0$ and hence we come to the case by reordering, where $\Delta _{145}= 0$ and $\Delta_{14} \neq 0$.  If $a_{53}a_{52}\neq0$, we get back to the first case and have the block form above but as concatenation of $D(a_1,a_4,a_5)$ and $D(a_3,a_2)$.  
If one of $a_{52}$ or $a_{53} $ is zero, then after  reordering if necessary, we arrive at

$$ A=\begin{bmatrix}-c_{1}&a_{12}&a_{13} &a_{14} &a_{15} \\0&-c_{2}&c_3 &0 &0 \\0&c_2&-c_{3} & 0& 0\\ 0 &0 &0 &-c_{4}&a_{45} \\ 0 & 0&0 & a_{54}&-c_{5} \end{bmatrix}$$
and $(a_1, a_2, a_3, a_4, a_5) = a_1 \sqcup d\{c_2,c_3\} \sqcup e\{c_4,c_5\}$ where $(c_2,c_3) = (c_4,c_5) = (d,e,a_1) = 1$
\end{proof} 
 \section{Sufficient condition for Principal Matrix}\label{sufficient}

Let $P $ be a pseudo principal matrix with $P\a= 0$.  If $P$ has at least one vanishing principal minor of size $n-1$, then we saw   structures for $\a$ and $P$ in Theorems \ref{structure} and \ref{rkn-1}. In this section we consider the case when none of these principal minors other than $P$ vanish. 

 \begin{lemma}\label{adjoint}
 Let $A$ be a pseudo principal matrix of rank $n-1$ such that $A\a= 0$. None of the principal minors of size $n-1$ are zero if and only if $A^TX=0$ has a solution in positive integers.  
 \end{lemma}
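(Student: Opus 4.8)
The plan is to use the adjugate of $A$ to pin down the (essentially unique) left null vector of $A$ and then read off the signs of its coordinates from Theorem~\ref{sign}. Since $\operatorname{rank} A = n-1$ we have $\det A = 0$, so $A\,(\adj A) = (\adj A)\,A = 0$, while $\adj A \neq 0$ because some $(n-1)\times(n-1)$ minor of $A$ is nonzero. The kernel of $A$ is one-dimensional and, as $A\a = 0$ with $\a \neq 0$, it is spanned by $\a$; hence every column of $\adj A$ is a scalar multiple of $\a$, and we may write $\adj A = \a\, w^{T}$ for a nonzero vector $w$. Reading this by rows, row $i$ of $\adj A$ is $a_i w^{T}$, and since $(\adj A)\,A = 0$ and $a_i \neq 0$ we get $w^{T} A = 0$, i.e., $A^{T} w = 0$. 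Because $\operatorname{rank} A^{T} = n-1$, the vector $w$ spans $\ker A^{T}$, so every solution of $A^{T} X = 0$ is a scalar multiple of $w$.

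Next I would identify the diagonal of $\adj A$ with the principal minors in play. The $(i,i)$ cofactor of $A$ is exactly $\Delta_{[n]\setminus\{i\}}$, so $(\adj A)_{ii} = \Delta_{[n]\setminus\{i\}}$; on the other hand $\adj A = \a\, w^{T}$ gives $(\adj A)_{ii} = a_i w_i$. Therefore $\Delta_{[n]\setminus\{i\}} = a_i w_i$ for every $i$, and since $a_i > 0$ the sign of $w_i$ equals the sign of $\Delta_{[n]\setminus\{i\}}$. By Theorem~\ref{sign}, each such size-$(n-1)$ minor is either $0$ or has sign $(-1)^{n-1}$, the same for all $i$.

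With this, both implications are immediate, and that is where I would conclude. If none of the $\Delta_{[n]\setminus\{i\}}$ vanish, then every $w_i \neq 0$ and all the $w_i$ share the common sign $(-1)^{n-1}$; replacing $w$ by $-w$ if necessary and clearing denominators (equivalently, taking the integer column $a_1 w$ of $\adj(A^{T}) = (\adj A)^{T}$) produces a vector $X \in (\Z_+)^n$ with $A^{T} X = 0$. Conversely, if $A^{T} X = 0$ has a solution in positive integers, that solution is a nonzero scalar multiple of $w$, so $w_i \neq 0$ for all $i$ and hence $\Delta_{[n]\setminus\{i\}} = a_i w_i \neq 0$ for all $i$. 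The step I expect to carry the weight is the identification $\operatorname{sign}(w_i) = \operatorname{sign}(\Delta_{[n]\setminus\{i\}})$ together with the appeal to Theorem~\ref{sign}: it is the uniform sign of the nonzero size-$(n-1)$ principal minors that makes the vanishing of one of them the \emph{only} obstruction to a positive left null vector. The rank-one structure of $\adj A$ is standard, but must be noted so that $w$ is well defined and nonzero.
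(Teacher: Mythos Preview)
Your proof is correct and follows essentially the same approach as the paper: both exploit the rank-one structure of $\adj A$ (your $\a\,w^{T}$ is the paper's $(a_i d_j)$), identify the diagonal entries with the size-$(n-1)$ principal minors, and invoke Theorem~\ref{sign} to conclude that the nonzero $w_i$ all share the sign $(-1)^{n-1}$. Your presentation is slightly more explicit about why $w$ is well defined and integer-valued, but the substance is the same.
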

 \begin{proof}
   Suppose none of the principal minors of size $n-1$ are zero. Then the diagonal entries of the $\adj A$ are not zero. Since the columns of the adjoint are multiples of $\a$, we have $\adj A =(a_id_j)_{n \times n}$ .
 By Theorem \ref{sign}, the sign of the diagonal entries are all $(-1)^{n-1}$.  So, we have $d_i$ are all positive or all negative.  In either case, from $\adj A A = \mathbf{0}I$, we get $\sum_{j=1}^n a_id_j a_{jk}= 0$ for all $i$ and $k$.  So, we get $\sum_{j=1}^n|d_j|a_{jk} = 0$ for all $k$ or $[|d_1|,\ldots |d_n|]A=0$, where $|d_i|>0$. Conversely, since rank of $A^T$ is $n-1$, if $A^TX=0$ has a solution in positive integers, then that solution must be a multiple of $d_i$'s.  So, $d_i\neq 0, 1\le i\le n$ which in turn implies none of the $n-1$ minors vanish. 
 \end{proof}
 Using Lemma \ref{adjoint} and Theorem $6.3(c)$ in \cite{Villarrealdegree} we conclude that
 \begin{corollary}
 If $A$ is a pseudo principal matrix of rank $n-1$ such that $A^TX=0$ has solutions in positive integers, then $A^{T}$ is a pseudo principal matrix.  
 \end{corollary}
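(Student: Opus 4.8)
The plan is to verify the three defining clauses of a pseudo principal matrix directly for $A^T$. Write $A=(a_{ij})$, so that the $(i,j)$ entry of $A^T$ is $a_{ji}$. Then the diagonal of $A^T$ is literally the diagonal of $A$, so $(A^T)_{ii}=a_{ii}<0$ for every $i$; and the off–diagonal entries of $A^T$ are exactly the off–diagonal entries of $A$ (with positions swapped), so $(A^T)_{ij}=a_{ji}\ge 0$ whenever $i\neq j$. Hence the sign pattern required of a pseudo principal matrix is automatically inherited under transposition, and the only remaining clause to check is that $A^T$ annihilates some vector of positive integers — which is precisely the hypothesis that $A^TX=0$ has a solution in positive integers.

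First I would invoke Lemma \ref{adjoint} to make the hypothesis concrete: it says the hypothesis is equivalent to asserting that none of the $(n-1)\times(n-1)$ principal minors of $A$ vanish, and in that case its proof produces an explicit positive kernel vector for $A^T$. Indeed, $\adj A=(a_id_j)_{n\times n}$, its diagonal entries $a_id_i$ are the nonzero $(n-1)$–minors, and by Theorem \ref{sign}(3) those all share the sign $(-1)^{n-1}$; since the $a_i$ are positive, the $d_i$ are all of one nonzero sign, and $[|d_1|,\dots,|d_n|]A=0$, i.e.\ $A^T(|d_1|,\dots,|d_n|)^T=0$. Dividing this vector by the gcd of its entries gives relatively prime positive integers $\mathbf b=\{b_1,\dots,b_n\}\subset\N$ with $A^T\mathbf b=0$, so $A^T=P(\mathbf b)$ (and of course $\operatorname{rank}A^T=\operatorname{rank}A=n-1$).

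The role of Theorem $6.3(c)$ of \cite{Villarrealdegree} is only to dispose of the bookkeeping that our definition of pseudo principal matrix permits some zero entries, whereas the analogous fact quoted earlier from \cite{Villarrealdegree} was stated for matrices with no zero entries and with $A\mathbf 1^T=0$; once $A^T$ is known to have a positive kernel vector, that theorem confirms $A^T$ stays in the class. I do not expect a genuine obstacle: the substance was already extracted in Lemma \ref{adjoint} and Theorem \ref{sign}, and all that is left is the observation that the defining sign conditions are transpose–invariant while "possessing a positive kernel vector" is, by hypothesis, available for $A^T$ as well as for $A$.
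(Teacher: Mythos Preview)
Your argument is correct and in fact more transparent than what the paper does. The paper's entire proof is the single sentence ``Using Lemma \ref{adjoint} and Theorem $6.3(c)$ in \cite{Villarrealdegree} we conclude that\ldots'', deferring the substance to an external reference. You instead verify the three defining clauses of a pseudo principal matrix directly for $A^T$: the sign pattern (negative diagonal, nonnegative off-diagonal) is literally the same set of entries and hence transpose-invariant, and the existence of a positive integer kernel vector is the hypothesis itself. That is already a complete proof.

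Two small comments. First, your middle paragraph invoking Lemma \ref{adjoint} and Theorem \ref{sign} to \emph{construct} the positive kernel vector $(|d_1|,\dots,|d_n|)$ is correct but superfluous for this corollary: the hypothesis hands you such a vector outright, so you need not rebuild it from the adjoint. Second, your final paragraph speculating on the role of Theorem $6.3(c)$ in \cite{Villarrealdegree} is unnecessary and slightly muddled; your direct verification needs no external input at all, so there is no ``bookkeeping'' left for that theorem to do. You can simply drop that paragraph. What remains is a cleaner, self-contained argument than the paper's citation.
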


For a Pseudo principal matrix $A$ of rank $n-1$, there is a $\b \in \N^n$ such that $A\b = 0$. So there is a unique $\a \in \N$ with the entries in $\a$ are relatively prime such that $A\a= 0$.  We denote this by $D^{-1}A = \a$.   
\begin{example}
 Matrix $A=\begin{pmatrix} -4&1&1&1\\2&-4&1&1\\1&1&-3&1\\1&2&1&-3\end{pmatrix}$ is a pseudo principal matrix.  $D^{-1}A =\{20,24,25,31\}$. And $A$ is indeed the principal matrix of  $ \{20,24,25,31\}$.  Thus, in this case, $D(D^{-1}(A)) = A$.  $A^T$ is also pseudo principal but $D^{-1}A^T = \{ 1,1,1,1\}$ and clearly $D(D^{-1}(A^T)) \neq A^T$.
\end{example}

 \begin{theorem}\label{pseudoisreal}
Let $n\ge 3$ and   $A=\begin{bmatrix} -c_{1} & a_{12} &\dots &a_{1n} \\ a_{21} & -c_{2} & \dots &a_{2n} \\ \vdots& \vdots & \dots & \vdots \\ a_{n1}&a_{n2}& \dots &-c_{n} \end{bmatrix}$ be a pseudo princial matrix of rank $n-1$ with $c_i \ge 2$.  Suppose, all the columns add upto zero and the first column of the adjoint is relatively prime and none of the columns in $A$ have more than one zero.   Then  $A$ is the principal matrix of $D^{-1}A$, i.e., $D(D^{-1}A) = A$ and the associated numerical semigroup has embedding dimension $n$. 
\end{theorem}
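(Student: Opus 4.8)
The plan is to show that the hypotheses force each diagonal entry $-c_i$ to be the minimal one among all relations $c_i a_i = \sum_{j\neq i} a_{ij} a_j$ with nonnegative coefficients, and that $\a = D^{-1}A$ minimally generates $\langle \a\rangle$; together these say exactly that $A = D(\a)$ with embedding dimension $n$. I would set $\a = D^{-1}A$, so the entries of $\a$ are relatively prime positive integers with $A\a = 0$. First I would record the structural consequence of the hypothesis that each column of $A$ has at most one zero. By Lemma \ref{adjoint}, since all $n-1$ principal minors are nonzero (indeed by hypothesis the first column of $\adj A$ is relatively prime, hence nonzero, and combined with Theorem \ref{sign} all the diagonal cofactors share the sign $(-1)^{n-1}$, so they are all nonzero), we get $\adj A = (a_i d_j)$ with all $d_j$ of one sign; and because the columns of $A$ sum to zero, $A^T \mathbf{1} = 0$, so $\mathbf{1}$ is (up to scalar) the solution of $A^T X = 0$, forcing all $d_j$ equal, say $d_j = d \neq 0$. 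Thus $\adj A = d\, \a \mathbf{1}^T$, i.e. every column of $\adj A$ equals $d\,\a$.

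Next I would prove $\a$ minimally generates $\langle\a\rangle$, equivalently $c_i \ge 2$ for all $i$ (Herzog-type criterion stated after the definition of principal matrix): this is given, $c_i \ge 2$ by hypothesis. So the remaining and main point is minimality of each $c_i$: suppose for contradiction that for some $i$ there is a relation $c_i' a_i = \sum_{j\neq i} b_{ij} a_j$ with $b_{ij}\ge 0$ and $0 < c_i' < c_i$. Subtracting an appropriate scalar multiple — here one subtracts the $i$th critical relation of $A$ from this shorter relation — I would derive a vector $v$ with $v\a = 0$, $v$ supported off position $i$ (or with a controlled entry there), and with mixed signs; then I would push this through the rank-$(n-1)$ structure: $v\a = 0$ forces $v$ to be a rational multiple of the $i$th row of $A$ (since the solution space of $x\a=0$ restricted appropriately is spanned by rows of $A$, because $\adj A$ has rank $1$ and $\text{rank } A = n-1$), and then the sign pattern (negative in one coordinate, nonnegative elsewhere, matching the pseudo-principal shape) together with the "at most one zero per column" hypothesis rules out the proper multiple. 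Concretely, the no-extra-zero condition is what prevents the $i$th row of $A$ from being rescalable by a proper fraction while keeping integrality and the required signs — it forces the $\gcd$ of the $i$th row to be $1$, hence $c_i$ is already minimal.

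I expect the main obstacle to be exactly this last minimality argument: converting "at most one zero per column" into the statement that no row of $A$ can be divided by an integer $>1$ (so that the integral relation it encodes is already primitive), and more subtly, ruling out a shorter relation that is genuinely new rather than a rescaling of the existing critical relation. The clean way is: any relation $c_i' a_i - \sum_{j\neq i} b_{ij} a_j = 0$ with all $b_{ij}\ge 0$ gives a row vector $w$ with $w\a = 0$ and $w$ of the pseudo-principal sign shape in row $i$; since $\ker(\cdot \mapsto \a)$ in $\Z^{1\times n}$ is the row space of $A$ (rank $n-1$, and the one extra relation $\mathbf{1}A^T=0$ is on the left, not here — one must check $A$'s rows span the full orthogonal complement of $\a$, which holds because $\adj A \cdot A = 0$ forces $\a^T$ orthogonal to rows, and dimension count gives equality), $w$ is a $\Q$-combination of rows of $A$; the sign constraints then force $w$ to be a nonnegative multiple of the single $i$th row, and primitivity of that row (from the zero-pattern hypothesis and relative primality, using that two zeros in a column would be needed to rescale) gives $c_i' \ge c_i$. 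I would close by noting that once all $c_i$ are minimal and all $c_i\ge 2$, the definition of principal matrix is met, so $A = D(\a) = D(D^{-1}A)$, and $c_i\ge 2$ for all $i$ is precisely the statement that the embedding dimension of $\langle\a\rangle$ is $n$.
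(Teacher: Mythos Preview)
Your argument has a genuine gap at the crucial step. You claim that if $w = (-c_i', b_{i1}, \ldots, b_{in})$ (nonnegative off the $i$th coordinate) satisfies $w\a = 0$, then ``the sign constraints force $w$ to be a nonnegative multiple of the single $i$th row'' of $A$. This is not true, and the paper's own Example~3 already illustrates why: for $\a = (7,11,12,16)$ the second row of $A$ is $(1,-5,4,0)$, yet $(3,-3,1,0)$ is a genuinely different relation with the same sign pattern, not a scalar multiple. More generally, the kernel of $\a$ has rank $n-1$, and the cone of vectors with one negative and $n-1$ nonnegative entries is $(n-1)$-dimensional, not a ray; so there is no reason, a priori, for a competing relation $w$ to be proportional to row $i$. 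Row-primitivity is therefore irrelevant --- the obstacle you yourself flag (``ruling out a shorter relation that is genuinely new'') is exactly the one your argument does not overcome.

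The paper's proof takes a completely different route. Given a competing relation $(-b_{11},b_{12},\ldots,b_{1n})$, it replaces the first row of $A$ by this vector to form $B$, which still kills $\a$ and hence is singular. It then produces a \emph{left} null vector $(1,x_2,\ldots,x_n)$ of $B$, shows via Cramer-type determinant identities and the sign theorem that each $x_j$ is a nonnegative integer, and then uses the ``at most one zero per column'' hypothesis to rule out $x_j = 0$: if some $x_j$ vanished, the $j$th column equation $b_{1j} + \sum_{i\ge 2} x_i a_{ij} = 0$ would force two of the $a_{ij}$ to be zero in that column. With all $x_j \ge 1$, the first-column equation reads $b_{11} = \sum_{i\ge 2} x_i a_{i1} \ge \sum_{i\ge 2} a_{i1} = c_1$, the last equality being the column-sum-zero hypothesis. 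This is where both structural hypotheses are actually used, and neither enters via ``primitivity of a row.''
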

\begin{proof}
By hypothesis there exist $\a=\begin{bmatrix} a_{1} \\ \vdots \\ a_{n} \end{bmatrix}\in \N^n$ such that $A \a=\0$.  As the rank of $A$ is $n-1$, we can take $\a= D^{-1}A$ and hence $ (a_1, \ldots,a_n) =1$.  So, there exist integers $\lambda_{1},\dots, \lambda_{n}$ such that $\lambda_{1}a_{1}+\dots+\lambda_{n}a_{n}=1$.  Further, by assumption, the column sums of $A$ are zero and hence $\adj A =(-1)^{n-1} [\a, \ldots, \a]$. 

In order to show that the matrix $A$ is the principal matrix $D(\a)$ it is sufficient to show that the relation in each row of $A$ is a principal relation. We show this for the first row and the same proof works for the rest of the rows. 
   
Suppose that $b_{11}a_{1}=\sum_{i=2}^{n}b_{1j}a_{j}$ is a relation with $b_{11} \geq 2$ and $b_{1j} \geq 0, j=2,\dots,n$.  We will show that $b_{11} \geq c_{1}$.   
Let $B=\begin{bmatrix} -b_{11} & b_{12}&\dots&b_{1n}\\a_{21}&-c_{2}&\dots&a_{2n}\\\vdots&\vdots&\dots&\vdots\\a_{n1}&a_{n2}&\dots&-c_{n}\end{bmatrix}$. So, $B\a= 0$ and hence $\det(B)=0$. Therefore, there exist $x_{i},i=1,\dots,n$ such that $\begin{bmatrix} x_1=1 & x_{2} &\dots&x_{n}\end{bmatrix}B=0$. For, $j\ge 2$, let $B_{j}=\begin{bmatrix} -b_{11} & b_{12}&\dots&b_{1n}\\ \vdots & \vdots & \dots & \vdots \\ \lambda_{1}&\lambda_{2}&\dots &\lambda_{n}\\
   \vdots& \vdots &\dots &\vdots \end{bmatrix}$ by replacing the $j$-th row of $B$ by $\lambda_{i}$'s.

Since $B_j \a = \begin{pmatrix}0 \\
   \vdots\\
   1\\
   \vdots\\ 0\end{pmatrix}$,
the $j$ th column of the adjoint of $B_j$  is $\det B_j \begin{bmatrix} a_{1} &a_{2}&\dots&a_{n} \end{bmatrix}^{T}$.

Using $\begin{bmatrix} x_{1} & x_{2} & \dots &x_{n}\end{bmatrix}B=0$ one gets 
\begin{equation*}
\det B_{j}a_{1}= (-1)^{j+1+j-2}\det \begin{vmatrix}  -c_{2} &\dots &\dots &a_{2n}\\ \vdots & \dots & \dots & \vdots \\b_{12}&\dots &\dots &b_{1n}\\ \vdots & \dots & \dots & \vdots \\a_{n2} &\dots &\dots &-c_{n}\end{vmatrix}= x_{j} \det \begin{vmatrix}  -c_{2} &\dots & \dots & a_{2n}\\  \vdots & \dots &\dots &\vdots \\ a_{j2}&\dots&\dots&a_{jn} \\\vdots & \dots &\dots &\vdots  \\a_{n2}&\dots &\dots &-c_{n}  \end{vmatrix}=-(-1)^{n-1}x_ja_1
\end{equation*}

So, $x_j = (-1)^{n-1}\det B_j$ is an integer. 

By Cramer's rule,
$a_1 \det B_j = (-1)^{j+1} B^{\{1,\ldots,j-1,j+1,\ldots,n\}}_{\{2,\ldots,n\}}= (-1)^{j+1}(-1)^{n-2+n-j}B^{\{2 \dots n\}\setminus j,1}_{\{2 \dots n\}\setminus j,j}$.

By theorem \ref{sign}, sign of $B^{\{2 \dots n\}\setminus j,1}_{\{2 \dots n\}\setminus j,j}$ is $n-2$.  We get, the sign of $\det B_j = (-1)^{n-1}$.  Hence, $x_j \ge 0$. 
 
Since $[x_1, \ldots x_n] B=0$, $n\ge 3$, at least three of the $x_j$'s are non zero.  We know, $x_1= 1$. Say, wlog, $x_2,x_3 \neq 0$.  
If $x_j = 0$ for some $j\ge 4$, then $0 =b_{1j}+\sum_{i\ne j, i\ge 2} a_{ij}x_i$. But then $a_{2j}=a_{3j}= 0$, which is impossible. So, $x_j \neq 0$ for any $j$ and are all positive integers. 

Now, $b_{11}= \sum_{i\ge 2} a_{i1}x_i \ge \sum_{i\ge 2} a_{i1} = c_1$.  

Thus, $c_1$ is the smallest possible entry in the diagonal. Similarly,  all of the diagonal entries are the smallest possible and hence $A$ is a principal matrix. 
\end{proof}
\begin{example}
\rm{
Both the conditions on the columns of $A$ in the proposition, \ref {pseudoisreal} are necessary. For instance, 
$\begin{pmatrix}
-5&1&1&4\\
4&-3&0&0\\
1&1&-2&0\\
0&1&1&-4\\
\end{pmatrix}
$ is a pseudo principal matrix, of $(24,32,28, 15)$.   It does have all the columns summing up to zero,  but has two zeros in the last column. It is not principal. For, the first entry $-5$ can be $-4$. Indeed, $4(24) = 3(32)$. 
In the Example \ref{pseudo}, 
$A=\begin{bmatrix} -4 & 0 & 1 & 1\\1 & -5 & 4 & 0\\0 & 4 &-5 &1\\3 & 1 & 0 & -2 \end{bmatrix}$,
we have all the columns sum up to zero, no column with two zeros but then the $n-1=3$ minors of any  of the three rows are not relatively prime. 
}
\end{example}

\section{Examples} \label{secexamples}
We would like to acknowledge that we have used the MonomialAlgebras package \cite{M2} to compute the binomial ideal $I_{\a}$ associated with $\a$. We have written codes in Macaulay2 to compute a principal matrix $D(\a)$  to come up with various examples in this section.
\begin{example} \label{ex} \rm{
\textbf{Embedding Dimension $n=4$}: 
\begin{enumerate}
\item Consider the semigroup $\a=17\{4,7\} \sqcup 13\{3,8\}$. A principal matrix is $D(\a)=\begin{bmatrix} -5 & 1 & 3 & 1\\2&-3&3&1\\ 0&0&-8&3\\0&0&8&-3\end{bmatrix}$ with rank $3$.
This has the minimal number of generators $4$, thus it is an almost complete intersection, not a gluing of the two semigroups $<4,7>$ and $<3,8>$ and hence is not symmetric. 
\item Consider the semigroup  $\a=10\{4,7\} \sqcup 13\{3,8\}$. A principal matrix is $D(\a)=\begin{bmatrix} -7 & 4 & 0 & 0\\7&-4&0&0\\ 0&0&-8&3\\0&0&8&-3\end{bmatrix}$ with rank $2$. Recall by Theorem \ref{bounds}, we must have $d\ge 9, e\ge 8$.  The list of all possible values of $d,e$ such that the corresponding semigroup of the form  $d\{4,7\} \sqcup e\{3,8\}$ is not a gluing is $d=\{10,13\}$ and $e=\{9,10,13,17\}$. The values of $d,e$ such that the corresponding semigroup $d\{4,7\} \sqcup e\{3,8\}$ (not guling) has the same principal matrix of rank $2$ like $D(a)$ mentioned above are as follows. $(d,e)=\{(10,9),(10,13),(10,17),(13,9),(13,17)\}$.

\item \label{exprincrankn-1} Consider the semigroups  of the form  $d\{5,8\} \sqcup e\{3,7\}$. 
If $d= 11, e= 17$, then the semigroup $\a=11\{5,8\} \sqcup 17\{3,7\}$ has a  principal matrix  $D(\a)=\begin{bmatrix} -8 & 5 & 0 & 0 \\ 8 & -5 & 0 & 0\\0 & 0 & -7 & 3\\0 & 0 & 7 & -3\end{bmatrix}$ with rank $2$.  It is not a gluing, the minimal number of generators is $8$. Recall by Theorem \ref{bounds}, we must have $d\ge 8, e\ge 9$. The list of all possible values of $d,e$ such that the corresponding semigroup is not a gluing is $d=\{8,11\}$ and $e=\{9,11,12,14,17,19,22,27\}$. The values of $d,e$ such that the corresponding semigroup $d\{5,8\} \sqcup e\{3,7\}$ (not guling) has the same principal matrix of rank $2$ like $D(a)$ mentioned above are as follows. $(d,e)=\{(8,17),(8,19),(8,27),(11,14),(11,17),(11,19),(11,27)\}$. An example of a gluing semigroup with same principal matrix is $13\{5,8\} \sqcup 18\{3,7\}$. The semigroup $13\{5,8\} \sqcup 10\{3,7\}$ has a principal matrix $D(\a)=\begin{bmatrix}-2 &0&2&1\\0&-5&1&7\\0&0&-7&3\\0&0&7&-3 \end{bmatrix}$ with rank $3$. It is a gluing and hence a complete intersection.  
\item Consider the semigroup $\a=11\{3,4\} \sqcup 13\{3,4\}$. A principal matrix is $D(\a)=\begin{bmatrix} -4 & 3 & 0 & 0\\4&-3&0&0\\ 0&0&-4&3\\0&0&4&-3\end{bmatrix}$ with rank $2$, symmetric, $3$ generators, thus a complete intersection.
\item Consider the semigroup $\a=7\{2,3\} \sqcup 6\{3,4\} $. This is a gluing of two complete intersections and is therefore a complete intersection and the principal matrix has two possibilities. Either 
$D(\a)=\begin{bmatrix} -3 & 2 & 0 & 0\\3&-2&0&0\\ 0&0&-4&3\\0&0&4&-3\end{bmatrix}$ with rank $2$ or
 $D(\a)=\begin{bmatrix} -3 & 0 & 1 & 1\\0&-2&1&1\\ 0&0&-4&3\\0&0&4&-3 \end{bmatrix}$ with rank $3$.  Since it is symmetric and a complete intersection, the principal matrix does not 
  have the standard form for the symmetric non complete intersection semigroups. 
\end{enumerate}
}
\end{example}
\begin{example} \rm{
\textbf{Embedding Dimension $n=5$}
\begin{enumerate}
\item Consider the semigroup $\a=11\{4,7\} \sqcup 13\{5,7,9\}$. 
A principal matrix is $D(\a)=\begin{bmatrix} -7 & 4 & 0 & 0&0\\7&-4&0&0&0\\ 0&0&-5&1&2\\0&0&1&-2&1\\0&0&4&1&-3\end{bmatrix}$ with rank $3$, and the minimal number of generators is $12$.

\item Consider the semigroup $\a=15\{2,3\} \sqcup 14\{2,5\} \sqcup \{43\}$. 
A principal matrix is $D(\a)=\begin{bmatrix} -3 & 2 & 0 & 0&0\\3&-2&0&0&0\\ 0&0&-5&2&0\\0&0&5&-2&0\\1&0&2&0&-2\end{bmatrix}$ with rank $3$, and the minimal number of generators is $7$.

\item Consider the semigroup $\a=14\{2,3\} \sqcup 12\{3,4\} \sqcup \{43\}$.
A principal matrix is $D(\a)=\begin{bmatrix} -3 & 2 & 0 & 0&0\\3&-2&0&0&0\\ 0&0&-4&3&0\\0&0&4&-3&0\\1&0&0&3&-4\end{bmatrix}$ with rank $3$ and the principal matrix has the expected form for rank 3.
Another principal matrix is $D(\a)=\begin{bmatrix} -3 & 0 & 1 & 1&0\\0&-2&1&1&0\\ 0&0&-4&3&0\\0&0&4&-3&0\\1&0&0&3&-4\end{bmatrix}$ with rank $4$.

This is an example where principal matrix is not only not unique but the rank also varies.  
\end{enumerate}
}
\end{example}
\begin{example}
\rm{
\textbf{Embedding Dimension $n=6$}:
\begin{enumerate}
    \item \textbf{Case $2+2+2$ }: Consider the semigroup $\a=11\{2,3\} \sqcup 13\{2,3\} \sqcup 17\{2,3\}$. A principal matrix is  $D(\a)=\begin{bmatrix} -3&2&0&0&0&0\\ 3&-2&0&0&0&0\\ 0&0&-3&2&0&0\\0&0&3&-2&0&0\\0&0&0&0&-3&2\\0&0&0&0&3&-2 \end{bmatrix}$  with rank $3$.  It also has another principal matrix $D(\a)=\begin{bmatrix} -3&2&0&0&0&0\\ 3&-2&0&0&0&0\\ 2&0&-3&0&1&0\\2&0&0&-2&1&0\\0&0&0&0&-3&2\\0&0&0&0&3&-2 \end{bmatrix}$ with rank $4$.  The minimal number of generators $10$. 
    \item \textbf{Case $2+1+1+2$}: Consider the semigroup $\a=\{10,15,14,21,18,27\}$. A principal matrix is $D(\a)=\begin{bmatrix} -3&2&0&0&0&0\\ 3&-2&0&0&0&0\\ 1&0&-2&0&1&0\\0&1&0&-2&0&1\\0&0&0&0&-3&2\\0&0&0&0&3&-2 \end{bmatrix}$ with rank 4 and the minimal number of generators $12$.
    It is easy to generate such examples.  If we simply find three suitable larger) numbers $a, b, c,$ with $3a\in <b,c>$, then $\a=b\{2,3\} \sqcup a\{2,3\} \sqcup c\{2,3\}$ 
    should have two such principal matrices. 
    \item \textbf{case $4+2$}: Consider the semigroup $\a=7\{5,6,7,9\} \sqcup \ 9\{4,6\}$. A principal matrix is  $D(\a)=$ \small \\ $\begin{bmatrix} -3&1&0&1&0&0\\ 1&-2&1&0&0&0\\ 1&0&-2&1&0&0\\0&0&0&-2&2&1\\0&0&0&0&-3&2\\0&0&0&0&3&-2 \end{bmatrix}$ \normalsize with rank $5$. Another principal matrix is \small $D(\a)=\begin{bmatrix} -3&1&0&1&0&0\\ 1&-2&1&0&0&0\\ 1&0&-2&1&0&0\\1&1&1&-2&0&0\\0&0&0&0&-3&2\\0&0&0&0&3&-2 \end{bmatrix}$ \normalsize with rank $4$. 
    Note that this decomposition does not guarantee for the existence of a principal matrix of rank $4$. For example, consider the semigroup $\a=7\{5,6,7,9\} \sqcup \ 9 \{5,6\}$. A principal matrix is $D(\a)=\begin{bmatrix} -3&1&0&1&0&0\\ 1&-2&1&0&0&0\\ 1&0&-2&1&0&0\\1&1&1&-2&0&0\\0&0&0&2&-4&1\\0&0&0&1&1&-2 \end{bmatrix}$. This semigroup does not have any principal matrix with rank $4$.
    \item \textbf{Case $2+3+1$}: Consider the semigroup $\a=13\{2,3\} \sqcup 7\{5,6,7\} \sqcup 43$. A principal matrix is $D(\a)=\small \begin{bmatrix} -3&1&0&1&0&0\\ 1&-2&1&0&0&0\\ 1&0&-2&1&0&0\\1&1&1&-2&0&0\\0&0&0&0&-3&2\\0&0&0&0&3&-2 \end{bmatrix}$ \normalsize  with rank $5$ and another principal matrix is \small $D(\a)=\begin{bmatrix} -3&2&0&0&0&0\\ 3&-2&0&0&0&0\\ 0&0&-4&1&2&0\\0&0&1&-2&1&0\\1&0&1&0&-3&2\\2&0&1&1&0&-3 \end{bmatrix}$ \normalsize with rank $4$.
    \item \textbf{Case $3+3$}: Consider the semigroup $\a=10\{5,6,7\} \sqcup 11 \{5,6,7\}$. A principal matrix is $D(\a)=$ \\ \small  $\begin{bmatrix} -4&1&2&0&0&0\\ 1&-2&1&0&0&0\\ 2&0&-3&2&0&0\\0&0&0&-4&1&2\\0&0&0&1&-2&1\\0&0&0&3&1&-3 \end{bmatrix}$ \normalsize with rank $5$ and another principal matrix is \small $D(\a)=\begin{bmatrix} -4&1&2&0&0&0\\ 1&-2&1&0&0&0\\ 3&1&-3&0&0&0\\0&0&0&-4&1&2\\0&0&0&1&-2&1\\0&0&0&3&1&-3 \end{bmatrix}$ \normalsize with rank $4$.
\end{enumerate}
}
\end{example}
\section{Appendix}

Let $A = ( a_{ij})$ be an $m\times n$ matrix.  Then for all $k,q $

$$A^{[r-1]}_{[r-1]}A^{[r-2],k}_{[r-2],q}-A^{[r-2],k}_{[r-1]}A^{[r-2],r-1}_{[r-2],q}= A^{[r-2]}_{[r-2]}A^{[r-1], k}_{[r-1],q}$$

\begin{proof}
Both sides are equal to zero if $k<r-1$ or $q< r-1$.  So, we take $k,q \ge r-1$.

 Let $r-1= j$ 
$$A^{[j]}_{[j]}A^{[j-1],k}_{[j-1],q}= (A^{[j-1]}_{[j-1]} a_{jj} + \sum _{i=1}^{j-1} (-1)^{i+j} a_{ij} A^{[j-1] - i, j}_{[j-1]} ) (A^{[j-1]}_{[j-1]}a_{kq}+ \sum _{i=1}^{j-1} (-1)^{i+j} a_{iq} A^{[j-1] - i, k}_{[j-1]}) $$
$$ A^{[j-1],k}_{[j]}A^{[j-1],j}_{[j-1],q} = (A^{[j-1]}_{[j-1]} a_{kj} + \sum _{i=1}^{j-1} (-1)^{i+j} a_{ij} A^{[j-1]- i, k}_{[j-1]} )(A^{[j-1]}_{[j-1]} a_{jq} +\sum _{i=1}^{j-1} (-1)^{i+j} a_{iq} A^{[j-1] - i, j} $$
 
Thus, $$\begin{matrix} A^{[j]}_{[j]}A^{[j-1],k}_{[j-1],q}- A^{[j-1],k}_{[j]}A^{[j-1],j}_{[j-1],q} = \\
  A^{[j-1]}_{[j-1]} [ (a_{jj}a_{kq}-a_{kj}a_{jq})A^{[j-1]}_{[j-1]}+\\
  + \sum_{i=1}^{j-1}(-1)^{i+j}  (a_{jj}a_{iq}-a_{ij}a_{jq})  A^{[j-1]-i, k}_{[j-1]} + \\
  \sum_{i=1}^{j-1}(-1)^{i+j}  (a_{kq}a_{ij}-a_{kj}a_{iq}) A^{[j-1]-i, j}_{[j-1]} +\\
  \sum_{s,t=1}^{j-1} (-1)^{s+t} a_{sj} a_{tq} (A^{[j-1] - s, j}_{[j-1]} A^{[j-1] - t, k}_{[j-1]} - A^{[j-1] - t, j}_{[j-1]} A^{[j-1] -s, k}_{[j-1]} )\\
  \end{matrix}
  $$
  
  But, $A^{[j-1] - s, j}_{[j-1]} A^{[j-1] - t, k}_{[j-1]} - A^{[j-1] - t, j}_{[j-1]} A^{[j-1] -s, k}_{[j-1]} = A^{[j-1]}_{[j-1]} A^{[j-1]-\{s,t\},j,k}_{[j-1]}$ by Plucker relations. 
  
  $$  \sum_{s,t=1}^{j-1} (-1)^{s+t}a_{sj}a_{tq} (A^{[j-1] - s, j}_{[j-1]} A^{[j-1] - t, k}_{[j-1]} - A^{[j-1] - t, j}_{[j-1]} A^{[j-1] -s, k}_{[j-1]} ) 
  =   \sum_{s>t=1}^{j-1} (-1)^{s+t}(a_{sj}a_{tq}-  a_{tj}a_{sq})  A^{[j-1]}_{[j-1]} A^{[j-1]-\{s,t\},j,k}_{[j-1]}$$

  So,
  
   $$\begin{matrix} A^{[j]}_{[j]}A^{[j-1],k}_{[j-1],q}- A^{[j-1],k}_{[j]}A^{[j-1],j}_{[j-1],q} = \\
  =   A^{[j-1]}_{[j-1]} [ (  (a_{jj}a_{kq}-a_{kj}a_{jq})A^{[j-1]}_{[j-1]}\\
  + \sum_{i=1}^{j-1}(-1)^{i+j}  (a_{jj}a_{iq}-a_{ij}a_{jq}) A^{[j-1]-i, k}_{[j-1]} +  
  \sum_{i=1}^{j-1}(-1)^{i+j}  (a_{kq}a_{ij}-a_{kj}a_{iq})A^{[j-1]-i, j}_{[j-1]} +\\
  \sum_{s>t=1}^{j-1} (-1)^{s+t}(a_{sj}a_{tq}-  a_{tj}a_{sq})  A^{[j-1]}_{[j-1]} A^{[j-1]-\{s,t\},j,k}_{[j-1]}\\
  = A^{[j-1]}_{[j-1]} A^{[j],k}_{[j]q} 
  \end{matrix}
$$  
\end{proof}

\end{document}